   \edef\Gin@extensions{\Gin@extensions,.mps}
\newtheorem{theorem}{Theorem}
\newtheorem{lemma}{Lemma}
\newtheorem{proposition}{Proposition}
\theoremstyle{definition}
\newtheorem{definition}{Definition}
\newtheorem{notation}{Notation}
\theoremstyle{remark}
\newtheorem{remark}{Remark}
\definecolor{DarkBlue}{rgb}{0,0.1,0.55}
\numberwithin{equation}{section}
\newcommand {\hide}[1]{}
\newcommand {\junk}[1]{}
\newcommand {\R} {\mathrm{R}}
\newcommand {\Q}         {\mathbb{Q}}
\newcommand {\eps} {{\varepsilon}}
\newcommand{\card}{\mathrm{card}}
\def\addots{\mathinner{\mkern1mu
\raise1pt\vbox{\kern7pt\hbox{.}}
\mkern2mu\raise4pt\hbox{.}\mkern2mu
\raise7pt\hbox{.}\mkern1mu}}
\newcommand{\HH}  {\mbox{\rm H}}
\renewcommand{\Im}{\mathrm{Im}}
\newcommand{\clos}{\mathrm{clos}}
\begin{document} 
\title{
An o-minimal Szemer\'edi-Trotter theorem}

\begin{abstract}
We  prove an analog of the Szemer\'edi-Trotter theorem in the plane for definable curves and points in any o-minimal structure
over an arbitrary real closed field $\R$. One new ingredient in the proof is
an extension of the  well known crossing number inequality for graphs to the case of embeddings 
in any o-minimal structure over an arbitrary real closed field.
\end{abstract}
\author{Saugata Basu}
\address{Department of Mathematics,
Purdue University, West Lafayette, IN 47906, U.S.A.}
\email{sbasu@math.purdue.edu}

\author{Orit E. Raz}
\address{
School of Mathematics, 
Institute for Advanced Study, Princeton NJ 08540, USA}
\email{oritraz@ias.edu}

\thanks{Part of this research was performed while the authors were visiting the Institute for Pure and Applied Mathematics (IPAM), which is supported by the National Science Foundation.
The first author was partially supported by NSF grants
CCF-1618981 and DMS-1620271. The second author was partially supported by NSF grant DMS-1128155.}

\maketitle

\section{Introduction}
The Szemer\'edi-Trotter theorem \cite{Szemeredi-Trotter} on incidences between lines and points 
in $\mathbb{R}^2$ is one of the first non-trivial results in quantitative incidence theory and is considered a foundational result in discrete geometry and 
extremal combinatorics. The statement of the theorem is as follows.

\begin{theorem}\cite{Szemeredi-Trotter}
\label{thm:sz-t}
There exists a constant $C  >0$, such that
given any finite set $\Gamma$ of lines in $\mathbb{R}^2$, and a finite set $\Pi$ of points in $\mathbb{R}^2$,
\[
\card(\{(p,\gamma) | p \in \Pi,  \gamma \in \Gamma, p \in \gamma\}) \leq C \cdot (\card(\Gamma)^{2/3}\cdot \card(\Pi)^{2/3} + \card(\Gamma)+ \card(\Pi)).
\]
\end{theorem}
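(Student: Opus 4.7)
The plan is to reduce the incidence problem to a crossing-number bound on an auxiliary graph drawn in the plane, following Sz\'ekely's approach. Write $n = \card(\Pi)$, $m = \card(\Gamma)$, and $I$ for the number of incidences; the goal is $I = O(n^{2/3} m^{2/3} + n + m)$.

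First I would build a graph $G$ drawn in $\mathbb{R}^2$: take $\Pi$ as the vertex set, and for each line $\gamma \in \Gamma$ carrying $k_\gamma \geq 2$ points of $\Pi$, order these points along $\gamma$ and join consecutive ones by the corresponding segment of $\gamma$. The resulting drawing has $V = n$ vertices and $E \geq I - m$ edges (lines with $k_\gamma \leq 1$ contribute at most $m$ to $I$ in total, and each line with $k_\gamma \geq 2$ contributes one more incidence than edges). Moreover any crossing of edges in this drawing arises from an intersection of the two supporting lines, and distinct lines meet in at most one point, so $\mathrm{cr}(G) \leq \binom{m}{2} \leq m^2/2$.

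Next I would feed this into the crossing number inequality: for a simple graph $G$ drawn in the plane with $E \geq 4V$, one has $\mathrm{cr}(G) \geq E^3/(64\, V^2)$. Either $E < 4n$, in which case $I < 4n + m$ and we are done, or else
\[
\frac{m^2}{2} \;\geq\; \mathrm{cr}(G) \;\geq\; \frac{(I-m)^3}{64\, n^2},
\]
which rearranges to $I \leq m + (32)^{1/3}\, n^{2/3} m^{2/3}$. Combining the two cases yields the bound with an explicit constant.

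The main obstacle is the crossing number inequality itself, which is not at all obvious: its classical proof combines Euler's formula for planar graphs (which gives the linear bound $\mathrm{cr}(G) \geq E - 3V + 6$) with a probabilistic amplification on a random induced subgraph sampled at an optimal density. In the wider context of this paper---extending Szemer\'edi--Trotter to definable curves and points in an arbitrary o-minimal structure over a real closed field---the real difficulty is precisely to recover such a crossing-number inequality when the underlying topological machinery (planarity, Euler's formula) is not directly available, and this is exactly the new ingredient advertised in the abstract. For the classical statement of Theorem~\ref{thm:sz-t} over $\mathbb{R}$, however, the inequality is available off the shelf and the argument above completes the proof.
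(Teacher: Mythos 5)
Your argument is correct and is exactly the Sz\'ekely crossing-number proof that this paper adapts: the paper itself only cites Theorem~\ref{thm:sz-t} without proof, but its Lemma~\ref{lem:lines} (via the crossing number inequality of Lemma~\ref{lem:cross} and Euler's formula, Lemma~\ref{lem:euler}) is precisely the o-minimal generalization of your argument, and specializing it to lines in $\mathbb{R}^2$ (where two points determine at most one line and two lines meet in at most one point, so $k=1$) recovers your proof. The only cosmetic difference is that the paper handles possibly disconnected incidence graphs by summing over components with H\"older's inequality, whereas for genuine lines your direct application of the crossing inequality is fine since the weak bound $\mathrm{cr}(G)\ge \card(E)-3\card(V)$ and its probabilistic amplification do not actually require connectivity.
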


Theorem \ref{thm:sz-t} has been generalized later in many different ways -- to algebraic curves instead of lines \cite{Pach-Sharir,SSZ2015}, incidences between points and algebraic hypersurfaces in higher dimensions \cite{Zahl2013, Sharir-Solomon,Basu-Sombra2015}, replacing $\mathbb{R}$ by $\mathbb{C}$ \cite{Toth,Zahl2015} etc.

It was shown in \cite{Basu9}, that many quantitative results in the theory of arrangements of  semi-algebraic sets of  ``constant description complexity'' could be generalized to the setting where the 
objects in the arrangements are not necessarily semi-algebraic sets of constant description complexity,
but rather are restricted to be the fibers of some fixed definable map in any o-minimal structure over a real closed field $\R$. 
(We refer the reader to \cite{Dries,Michel2} for the definition and basic results on
o-minimal structures.)
More recently, o-minimal generalizations of results in combinatorial geometry have become a very active topic of research \cite{Starchenko-Chernikov} (see also the survey article \cite[\S 6]{Scanlon}). 

Recently, Fox et al. \cite[Theorem 1.1]{Pach-et-al} obtained a very far reaching generalization of Theorem \ref{thm:sz-t}, by extending it to the case of  semi-algebraic curves
of fixed description complexity. It is thus a natural question if incidence results, such as the Szemer\'edi-Trotter theorem, and its various generalizations can also be extended to the more general setting of o-minimal geometry.
In this paper we prove the following o-minimal generalization of the Szemer\'edi-Trotter theorem in the plane.

\subsection{Main Result}
For the rest of the paper we fix an o-minimal structure over a real closed field $\R$. 

We prove the following theorem.
\begin{theorem}
\label{thm:main'}
Let $V$ be a definable subset of $E_1\times E_2$, where, for $i=1,2$,  
$E_i$ is a definable set  of dimension at most two.
Then one of the following holds.
\begin{enumerate}[(i)]
\item
There exists a constant $C > 0$, which depends on $V, E_1, E_2$, such that
for every finite subsets $P\subset E_1$, $Q\subset E_2$,
$$
\card(V\cap (P\times Q)) \leq C \cdot (\card(P)^{2/3}\cdot\card(Q)^{2/3}+\card(P)+\card(Q)).
$$
\item
There exist  definable subsets $\alpha\subset E_1$ and $\beta\subset E_2$, with $\dim(\alpha),\dim(\beta) \geq 1$, such that
$$
\alpha\times \beta\subset V.
$$
\end{enumerate}
\end{theorem}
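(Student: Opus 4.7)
The approach follows the classical Székely argument for the Szemerédi--Trotter theorem, adapted to the o-minimal setting via the extended crossing number inequality announced in the abstract. The plan has three stages: (a) recast the incidence problem as one about an o-minimal family of ``curves'' in $E_1$; (b) run a dichotomy that either yields conclusion (ii) or gives uniformly bounded pairwise intersections of these curves; (c) build the incidence graph and apply the o-minimal crossing number inequality.

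\textbf{Stage (a): Setup.} View $V\subset E_1\times E_2$ as producing two definable families of fibers, $V_q=\{p\in E_1:(p,q)\in V\}$ for $q\in E_2$ and $V^p=\{q\in E_2:(p,q)\in V\}$ for $p\in E_1$. By cell decomposition applied to these families, partition $E_1$ and $E_2$ into finitely many definable cells so that on each pair of cells the dimensions of the generic fibers of $V$ in both directions are constant. Since the bound in (i) is preserved under finite partitions (by absorbing the number of pieces into $C$) and since conclusion (ii) is local, I may work on a single piece.

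\textbf{Stage (b): Dichotomy (the main obstacle).} There are three regimes to consider. First, if the generic dimension of $V_q$ (as a function of $q$ in our cell) is at least $2$, then using o-minimal definable trivialization for the family $\{V_q\}$ one can find a subcell $\beta\subset E_2$ of dimension $\geq 1$ and a definable $\alpha\subset E_1$ of dimension $\geq 1$ with $\alpha\subset V_q$ for every $q\in\beta$, giving (ii). Second, suppose the generic fiber dimension is $1$, but the definable set $\Delta=\{(q,q')\in E_2^2:\dim(V_q\cap V_{q'})\geq 1\}$ has dimension $\geq\dim E_2+1$; then by a definable selection one extracts a positive-dimensional arc $\alpha\subset E_1$ contained in $V_q$ for $q$ ranging over a positive-dimensional $\beta\subset E_2$, again yielding (ii). The dual analysis applied to the family $\{V^p\}$ handles the symmetric degenerations. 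The remaining regime is the generic one: each $V_q$ is a 1-dimensional definable curve, and the set of pairs $(q,q')$ for which $V_q\cap V_{q'}$ is infinite is low-dimensional enough that it can be absorbed into the error term $\card(P)+\card(Q)$. For the surviving pairs, uniform bounds for definable families (applied to $\{(q,q',p):p\in V_q\cap V_{q'}\}$) produce constants $s,t$ such that any two of our curves meet in at most $s$ points, and any $s$ points lie on at most $t$ of them. The delicate point throughout stage (b) is that one cannot appeal to algebraic geometry (Bezout, irreducible component analysis); every bound must come from o-minimal dimension theory and definable families.

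\textbf{Stage (c): Crossing number argument.} After stage (b), I am in the Pach--Sharir pseudoline setting, adapted to o-minimal curves. Fix $P\subset E_1$ with $\card(P)=n$ and $Q\subset E_2$ with $\card(Q)=m$, and assume the non-trivial range $n\leq I(P,Q)\leq n\cdot m$ where $I(P,Q)=\card(V\cap(P\times Q))$. Using an o-minimal triangulation of $E_1$ I obtain a definable embedding of the curves $\{V_q\}_{q\in Q}$ into a two-dimensional definable cell complex, in which each pair of curves crosses in at most $s$ points. Form the graph $G$ whose vertex set is the set of incidences and whose edges join consecutive incidences along each curve $V_q$; standard bookkeeping gives $\lvert E(G)\rvert\geq I(P,Q)-m$ and $\lvert V(G)\rvert=n$, with at most $O(m^2)$ edge-crossings. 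Plugging into the o-minimal crossing number inequality (the statement extracted earlier in the paper, replacing Euler's formula by its o-minimal analog) and balancing yields $I(P,Q)=O(n^{2/3}m^{2/3}+n+m)$, which is conclusion (i).

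The heart of the argument, and the place I expect to spend the most effort, is stage (b): pinning down precisely which dimension-theoretic condition on the family $\{V_q\}$ forces a definable rectangle $\alpha\times\beta\subset V$, and verifying that the failure of this condition is exactly what the crossing-number machinery needs as input.
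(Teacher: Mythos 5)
Your overall architecture---cell decomposition to reduce to the case $E_i \cong \R^{\dim E_i}$, a dimension-theoretic dichotomy producing either a definable rectangle or uniformly bounded intersections, then Sz\'ekely's crossing-number argument---matches the paper's, which proves this statement by exactly your stage (a) reduction to Theorem \ref{thm:main} and then proves Theorem \ref{thm:main} by your stages (b) and (c). There is, however, a genuine gap in stage (b), precisely at the point you yourself flag as the heart of the matter. Your proposed dichotomy condition for the one-dimensional-fiber regime, namely that $\Delta=\{(q,q'):\dim(V_q\cap V_{q'})\ge 1\}$ have dimension at least $\dim E_2+1$, is not the right test. Take $V\supset\alpha\times\beta$ with $\alpha\subset E_1$, $\beta\subset E_2$ curves and all fibers $V_q$ one-dimensional: then $\Delta\supset\beta\times\beta$ has dimension $2=\dim E_2$, your test is not triggered, and you pass to the ``generic'' regime; yet for $P\subset\alpha$, $Q\subset\beta$ one has $\card(V\cap(P\times Q))=\card(P)\cdot\card(Q)$, every pair $(q,q')\in Q\times Q$ has infinite intersection, and no absorption of bad pairs into $\card(P)+\card(Q)$ is possible. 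The correct condition (see Lemma \ref{lem:Usmall} and the proof of Theorem \ref{thm:main}) is fiber-wise: if for some single $q$ the set $U_{1,q}=\{q':\dim(V_q\cap V_{q'})\ge 1\}$ is positive-dimensional, then $\{(p,q')\in V_q\times U_{1,q}:(p,q')\in V\}$ has dimension $\dim U_{1,q}+1$, i.e.\ is full-dimensional in the product $V_q\times U_{1,q}$, and Lemma \ref{lem:prod} yields conclusion (ii); otherwise every $U_{1,q}$ (and dually every $U_{2,p}$) is finite of size at most a uniform $M$, and one must then \emph{color} $Q$ and $P$ with $M+1$ colors so that within each color class all pairwise intersections are finite. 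Absorption into the linear term is not an available move here, because a bad pair violates hypothesis (a) or (b) of the pseudoline incidence lemma (Lemma \ref{lem:lines}) rather than merely contributing a bounded number of extra incidences.

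Two smaller points. The crossing-number inequality and Euler's formula are established for definable embeddings in $\R^2$ (via o-minimal Alexander--Lefschetz duality), not in an arbitrary two-dimensional definable cell complex; so in stage (c) you must first transport the curves through the definable homeomorphism of each cell onto $\R^{\dim C}$ provided by your stage (a). And the graph $G$ should have vertex set $P$ (not the set of incidences), must be split into connected components before applying the crossing-number inequality (which requires connectedness and $\card(E)>4\cdot\card(V)$), and the componentwise bounds recombined with H\"older's inequality, as in the proof of Lemma \ref{lem:lines}.
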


Theorem \ref{thm:main'} will be an immediate consequence of the following simpler version.

Let $E = \R^2$ and
$F = E \times E$, and $\pi_1,\pi_2:F \rightarrow E$, and $\sigma_1,\sigma_2:E \rightarrow \R$, be the canonical
projections. For definable subsets $P,Q \subset E$, there is a (definable) canonical injection $P \times Q \rightarrow F$,
and we will slightly abuse notation and consider $P \times Q$ to be a (definable) subset of $F$. 

\begin{theorem}\label{thm:main}
Let $V\subset F$ be a definable subset of $F$. 
Then one of the following holds.
\begin{enumerate}[(i)]
\item
\label{itemlabel:main:i}
There exists a constant $C_V > 0$, such that
for every finite subsets $P,Q\subset E$,
$$
\card(V\cap (P\times Q)) \leq C_V \cdot (\card(P)^{2/3}\cdot\card(Q)^{2/3}+\card(P)+\card(Q)).
$$
\item
\label{itemlabel:main:ii}
There exist  definable subsets $\alpha,\beta\subset E$, $\dim(\alpha),\dim(\beta) \geq 1$, such that
$$
\alpha\times \beta\subset V.
$$
\end{enumerate}
\end{theorem}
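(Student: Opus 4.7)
The plan is to emulate the Sz\'ekely-style proof of the classical Szemer\'edi-Trotter theorem via a crossing-number inequality, now using the o-minimal crossing-number inequality established earlier in the paper as a black box. For each $q\in E$, write $V_q\defeq\{p\in E:(p,q)\in V\}$ for the fiber over $q$, so that $\card(V\cap(P\times Q))=\sum_{q\in Q}\card(V_q\cap P)$; one should think of the $V_q$ as a definable family of ``curves'' in $E$ indexed by ``parameters'' $q\in Q$, with the elements of $P$ playing the role of ``points''.

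First I would apply an o-minimal cell decomposition of $F=E\times E$ compatible with $V$ and with the two canonical projections $\pi_1,\pi_2:F\to E$, and bound the incidence count one cell of $V$ at a time (absorbing the number of cells, which depends only on $V$, into $C_V$). For a single cell $C\subseteq V$, each $\pi_i(C)$ is a cell of $E$, and the fibers $\{C_q\}_{q\in\pi_2(C)}$ form a definable family of cells of a common dimension $d\in\{0,1,2\}$. If $d=2$ and $\dim\pi_2(C)\ge 1$, the cylindrical structure of $C$ over $\pi_2(C)$ yields an open product subcell $\alpha\times\beta\subseteq C$ with $\dim\alpha,\dim\beta\ge 1$, placing us in case (ii). If $d=0$, a uniform bound on definable families gives $\card(C_q)=O(1)$, so this piece contributes only $O(\card(Q))$ incidences, and the symmetric remark handles $d=2$ with $\dim\pi_2(C)=0$. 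The nontrivial regime is $d=1$: the fibers $V_q$ are definable $1$-cells, i.e., curves in the plane $E=\R^2$.

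For the $d=1$ case, the crux is an intersection dichotomy. Consider
\[
W\defeq\{(p,q_1,q_2)\in E\times E\times E \, : \, p\in V_{q_1}\cap V_{q_2},\ q_1\ne q_2\}.
\]
Either $V_{q_1}\cap V_{q_2}$ is finite for $(q_1,q_2)$ off a lower-dimensional subset of $E\times E$, in which case uniform bounds on definable families produce a constant $k$ with $\card(V_{q_1}\cap V_{q_2})\le k$ generically; or $\dim(V_{q_1}\cap V_{q_2})\ge 1$ on a positive-dimensional locus of pairs $(q_1,q_2)$. In the latter case, applying the o-minimal fiber-dimension theorem to the projection $W\to E\times E$ would produce a definable $\alpha\subset E$ with $\dim\alpha\ge 1$ contained in $V_q$ for every $q$ in a definable set $\beta\subset E$ with $\dim\beta\ge 1$, giving $\alpha\times\beta\subseteq V$ and hence case (ii). A symmetric argument applied to the opposite fibers $V^p\defeq\{q:(p,q)\in V\}$ controls the number of curves through a single point.

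In the remaining non-degenerate regime one now runs the crossing-number argument: form the topological graph $G$ drawn in $E\cong\R^2$ whose vertices are the incidence points from $P$ and whose edges are the arcs of the curves $V_q$ between consecutive incidences. Then $G$ has at most $\card(P)$ vertices, at least $I-\card(Q)$ edges (with $I\defeq\card(V\cap(P\times Q))$), and, by the intersection dichotomy, at most $k\binom{\card(Q)}{2}$ crossings in this drawing. Feeding this into the o-minimal crossing-number inequality from earlier in the paper gives $(I-\card(Q))^3=O(\card(P)^2\card(Q)^2)$ whenever $I\ge 4(\card(P)+\card(Q))$, which on rearrangement yields the bound of case (i). The main obstacle I expect is the intersection dichotomy: extracting a single $\alpha$ of dimension at least one shared across a positive-dimensional family of fibers $\{V_q\}_{q\in\beta}$, together with the dual extraction of $\beta$, requires careful fiber-dimension bookkeeping and definable selection within families of $1$-cells, and must be arranged so that both extracted sets end up at least one-dimensional. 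A secondary subtlety is that the arc graph must be drawn so that its crossings really are governed by pairwise fiber intersections, which in the o-minimal setting relies on curve-selection and triangulation packaged inside the o-minimal crossing-number inequality rather than on any algebraic B\'ezout-type estimate.
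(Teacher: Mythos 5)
Your overall architecture matches the paper's: dispose of the degenerate fibers and degenerate pairs by extracting a definable product $\alpha\times\beta\subset V$ (the paper's Lemma~\ref{lem:prod}), and run Sz\'ekely's crossing-number argument in the remaining regime. The gap is exactly where you suspected it, in the ``intersection dichotomy'', and the way you have set it up does not work. You phrase the dichotomy in terms of the dimension of the \emph{total} bad locus $B=\{(q_1,q_2):\dim(V_{q_1}\cap V_{q_2})\ge 1\}$ inside $E\times E$: either $B$ is lower-dimensional (and intersections are bounded ``generically''), or $B$ is positive-dimensional (and you claim case (ii)). Neither branch is sound as stated. If $B$ is positive-dimensional but every fiber $U_{1,q}=\{q':\dim(V_q\cap V_{q'})\ge1\}$ is finite, case (ii) can simply fail: e.g.\ take $V_q$ to be the union of the unit circles centered at $q$ and at $-q$, so that $V_q$ and $V_{-q}$ share a whole circle and $B$ contains the $2$-dimensional set of pairs $(q,-q)$, yet no product $\alpha\times\beta\subset V$ with $\dim\alpha,\dim\beta\ge1$ exists (for fixed $p$ the set of $q$ with $p\in V_q$ is a union of two circles, and three points of $\alpha$, no two related by $p\mapsto -p$, force $\beta$ into a finite intersection of such unions). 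The fiber-dimension theorem applied to $W\to E\times E$ produces no common $\alpha$ here because the one-dimensional intersections vary with the pair. Conversely, in your ``former'' branch, generic boundedness of $\card(V_{q_1}\cap V_{q_2})$ does not give hypothesis (b) of the pseudo-line incidence lemma for an \emph{arbitrary} finite $Q$: in the same example one may take $Q$ closed under $q\mapsto -q$, so linearly many pairs of the chosen curves have infinite intersection and the crossing count of your drawing is not controlled by $k\binom{\card(Q)}{2}$.

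The paper's fix is fiber-wise and has two parts, both absent from your proposal. First, the correct dichotomy is on $\dim U_{1,q}$ for each fixed $q$ (and dually on $\dim U_{2,p}$): if $\dim U_{1,q}\ge1$ for \emph{some} $q$, then the incidence set inside $\gamma_q\times U_{1,q}$ has dimension $\dim U_{1,q}+1$, i.e.\ is full-dimensional in that product, and Lemma~\ref{lem:prod} extracts $\alpha\times\beta$ with both factors of dimension at least one --- case (ii) is reached by fixing one curve, not by projecting the triple space $W$. Second, when every $U_{1,q}$ and $U_{2,p}$ is zero-dimensional they are still generally nonempty, only uniformly of cardinality at most some $M=M(V)$; the paper then colors the graphs on $P$ and on $Q$ whose edges are the bad pairs, partitions $P$ and $Q$ into $M+1$ classes within which all pairwise intersections have cardinality at most a constant $N$, and applies Lemma~\ref{lem:lines} to each of the $(M+1)^2$ subproblems (this is Lemma~\ref{lem:Usmall}). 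Without this coloring step the hypotheses of the incidence lemma are not met and the argument does not close. (Your treatment of the two-dimensional fibers and your use of the crossing-number inequality itself are fine and essentially the paper's.)
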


\begin{remark}
Simultaneously with our paper Chernikov, Galvin and Starchenko \cite{2016arXiv161200908C} also announced a similar result. Their result is more general than ours 
(it applies to general distal structures), but the techniques behind their proof are quite different.
\end{remark}

\begin{remark} 
Note that in case property 
\eqref{itemlabel:main:ii}  holds, then for any finite $P\subset \alpha$, $Q\subset \beta$
we have $\card(V\cap(P\times Q))=\card(P )\cdot \card(Q)$.
\end{remark}

\begin{remark}
Note also that in case the fibers of $\pi_1|_V,\pi_2|_V$ are of dimension at most one, each point in $(p,q) \in V \cap (P \times Q)$ corresponds to an ``incidence" of the point $p$ with the definable curve
$\pi_1(\pi_2^{-1}(q) \cap V)$. This is the sense in which Theorem \ref{thm:main} can be thought of as an o-minimal version of  Theorem \ref{thm:sz-t}.
Also notice that the statement of Theorem \ref{thm:main} is symmetric with respect to the sets $P,Q$.
\end{remark}

\begin{remark}
The formulation of Theorem \ref{thm:main} is inspired by the main results 
and the proofs in
\cite{MPVD, Raz-Sharir-deZeeuw1,Raz-Sharir-deZeeuw2}.
In these papers one is interested in a bound on the 
cardinality of a set of the form $V\cap (P\times Q)$,
where $V\subset \mathbb R^4$ is an algebraic variety of fixed degree, and each of $P,Q$
is a finite subset of $\mathbb R^2$ (of arbitrarily large cardinality).
This problem is then interpreted as an incidence problem between points and curves in $\mathbb{R}^2$.
As in Theorem~\ref{thm:main}, 
there exists exceptional varieties for which only a trivial upper bound on the number of incidences can hold, 
and these exceptional cases are identified (see Lemma~\ref{lem:Usmall} below).
\end{remark}

\begin{remark}(Tightness).
Since the  Szemer\'edi-Trotter theorem is known to be tight, up to a constant, the bound in property \eqref{itemlabel:main:i} in Theorem \ref{thm:main} cannot be improved.  As noted previously,
Chernikov, Galvin and Starchenko  \cite{2016arXiv161200908C} have  also considered the problem of proving incidences between points and definable curves
where the definable curves come from a definable family of some fixed $d \geq 2$, and have proved a generalization of 
Theorem \ref{thm:main} using the technique of ``cuttings''.  It is likely that the method used in this paper can also be extended to this general situtation.   However, the bound in \cite{2016arXiv161200908C} 
for definable families of dimension $d > 2$ is not believed to be sharp (see for example \cite[Theorem 1.3]{Sharir-Zahl}), 
and we do not attempt to prove this more general result in this paper.
\end{remark}

We first derive Theorem \ref{thm:main'} from Theorem \ref{thm:main}.

\begin{proof}[Proof of Theorem \ref{thm:main'}]

For each $i=1,2$, suppose that $E_i\subset \R^{d_i}$
and consider a cylindrical definable cell decomposition\footnote{For details about {\it cylindrical definable cell decomposition} see e.g \cite[\S 2]{Michel2} and references therein.} of $\R^{d_i}$ adapted to $E_i$.
Note that each cell of the decomposition is at most two-dimensional, by our assumption
on $E_1,E_2$.

For each cell $C\subset E_1$ (resp., $D\subset E_2$) of the decomposition
there exists a definable homeomorphism $\theta_{C}:C\to \R^{\dim(C)}$ (resp., 
$\varphi_{D}:D\to \R^{\dim(D)}$). 
Let 
 $f_{C,D} = \theta_C \times \varphi_D :C\times D\to \R^{\dim(C)}\times \R^{\dim(D)}$ be the homeomorphism induced by $\theta_C$ and 
 $\varphi_D$.
Applying Theorem~\ref{thm:main} to 
the sets
$$
\widetilde V:=f_{C,D}(V\cap (C\times D))\subset \R^{\dim(C)}\times \R^{\dim(D)},
$$
$\widetilde P:=\theta_{C}(C\cap P)$, and $\widetilde Q:=\varphi_{D}(D\cap Q)$,
and using the invertability of $f_{C,D}$,
we conclude that either 
there exist definable  $\alpha \subset E_1$, $\beta\subset E_2$ of dimension at least one, such that $\alpha\times\beta\subset V$,
in which case we are done, or 
$$
\card\left(V\cap(C\times D)\cap (P\times Q)\right)\le C_{\widetilde V} \cdot (\card(P)^{2/3}\cdot\card(Q)^{2/3}+\card(P)+\card(Q)).
$$

Repeating the argument for each pair of cells $C,D$, and recalling that the number of cells is finite, 
this completes the proof 
of the theorem.
\end{proof}

The rest of the paper is devoted to the proof of Theorem \ref{thm:main}.

\subsection{Outline of the proof of Theorem \ref{thm:main}}
There are two main approaches to recent proofs of the classical Szemer\'edi-Trotter theorem in the plane.
The first approach uses the well known technique of efficient partitioning the plane (using either the notion of cutting \cite{Matousek} or 
the newer method of polynomial partitioning \cite{Guth-Katz}) adapted to the given set of points, and then using a divide-and-conquer argument. The technique of polynomial partitioning is as yet not available over o-minimal structures, and the ``cutting lemma'' argument while feasible to generalize to o-minimal structures is technically complicated
(this is the approach taken in \cite{2016arXiv161200908C}).
The second method, that we adapt in this paper,  
is due to Sz\'ekely \cite{Szekely} who used an argument based on the 
``crossing number inequality'' for abstract graphs
due to Ajtai et al \cite{ACNS} and independently Leighton \cite{Leighton}.
The definition of the ``crossing number'' 
of a graph needs to be adapted to the o-minimal setting.

We define the \emph{definable crossing number} of graphs (Definition \ref{def:crossing}), 
in terms of \emph{definable embeddings}  of graphs in $\R^2$
(Definition \ref{def:embedding}),
and prove the definable analog of the Euler relation for definable embeddings of planar graphs (Lemma \ref{lem:euler}). 
The proofs of some of the lemmas use the existence of a good (co)-homology theory for general 
o-minimal structures \cite{Woerheide}, and in particular we use the o-minimal 
version of Alexander-Lefschetz duality theorem in $\R^2$ by Edmundo and Woerheide~\cite{Edmundo-Woerheide2009}.

The analog of the crossing number inequality in this definable setting 
(Lemma \ref{lem:cross}) then follows 
from Euler's relation, using a now-standard probabilistic argument
introduced first in \cite[p. 285]{Alon-Spencer}.

Using the crossing number inequality we then prove (see Lemma \ref{lem:lines}), following Sz\'ekely's argument~\cite{Szekely},
that given a finite set of points and a finite set of definable curves belonging to a fixed definable family,
satisfying a certain combinatorial condition on incidences (namely, that their incidence graph does not contain
a $K_{2,k}$ or a $K_{k,2}$ 
(where $K_{s,t}$ denotes the complete bi-partite graph with the two vertex sets of cardinalities $s$ and $t$ respectively)
 for some fixed $k$), also satisfies the Szemer\'edi-Trotter bound (with the constant depending on $k$).

Finally, 
in Section \ref{sec:proof-of-main} we establish the
dichotomy in Theorem \ref{thm:main} via Lemmas 
\ref{lem:lines},
\ref{lem:prod},
and \ref{lem:Usmall}. 
This proof of the key Lemma~\ref{lem:Usmall} uses ideas introduced in \cite{Raz-Sharir-deZeeuw1} for treating the algebraic case.
Theorem \ref{thm:main} then follows immediately from these lemmas.

\section{Incidence bound for definable pseudo-lines}
\label{sec:pseudolines}
We have the following bound for incidences between points and definable pseudo-lines in $\R^2$.
\begin{lemma}\label{lem:lines}
Let $V \subset F$ be as in Theorem \ref{thm:main}, and $k > 0$. 
There exists a constant $C_{V,k}$, depending only on $V$ and the parameter $k$, with the following property.
For every set $\Pi$ of $m$ points in $\R^2$, and a set
$\Gamma$  of $n$ definable curves\footnote{A {\it definable curve} 
is a one-dimensional definable set.} in $\R^2$, where each 
$\gamma \in \Gamma$  is of the form $\pi_1(\pi_2^{-1}(q)\cap V)$ for some $q \in \R^2$,
such that:
\begin{enumerate}[(a)]
\item \label{itemlabel:lines:a}
Every pair of distinct $p,p'\in \Pi$ belongs to at most $k$ curves of $\Gamma$, and
\item \label{itemlabel:lines:b}
Every pair of distinct  $\gamma,\gamma'\in \Gamma$ intersect in at most $k$ points,
\end{enumerate}
we have 
$$
I(\Pi,\Gamma) :=\card\left(\{(p,\gamma)\mid p \in \Pi, \gamma \in \Gamma, p \in \gamma\}\right) \leq C_{V,k} \cdot\left(m^{2/3}n^{2/3}+m+n\right).
$$
\end{lemma}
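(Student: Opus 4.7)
The plan is to adapt Sz\'ekely's classical argument to the o-minimal setting, using the definable crossing number inequality (Lemma \ref{lem:cross}) announced in the outline. First I would build an incidence multigraph $G$ whose vertex set is $\Pi$ and whose edges come from arcs of the curves in $\Gamma$: on each curve $\gamma \in \Gamma$ containing $n_\gamma = I(\Pi,\{\gamma\})$ points of $\Pi$, order the incident points along $\gamma$ (possible because $\gamma$ is a one-dimensional definable set, hence by o-minimal cell decomposition decomposes into finitely many definable arcs that can be linearly ordered), and introduce an arc of $G$ between each pair of consecutive incidence points along $\gamma$. Curves with $n_\gamma \le 1$ are discarded, which accounts for a loss of at most $n$; thus the total number of arcs satisfies $\sum_\gamma (n_\gamma - 1) \ge I(\Pi,\Gamma) - n$.

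Next I would convert this multigraph into a simple graph to which Lemma \ref{lem:cross} applies. Assumption \eqref{itemlabel:lines:a} says that any two points of $\Pi$ lie on at most $k$ curves of $\Gamma$, hence at most $k$ arcs of $G$ join the same pair of vertices; pruning multi-edges we obtain a simple graph $G'$ with
\[
E \ge (I(\Pi,\Gamma) - n)/k
\]
edges on $V = m$ vertices. The arcs themselves serve as a definable drawing of $G'$ in $\R^2$: each edge is a definable sub-arc of some $\gamma \in \Gamma$, and hence is (after suitable definable reparametrization from o-minimal cell decomposition on $\gamma$) a definable embedding of an interval into $\R^2$. This is exactly the type of drawing covered by Definition \ref{def:embedding}, so the definable crossing number of this drawing is a valid upper bound for the definable crossing number $\mathrm{cr}(G')$.

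Counting crossings in this drawing is straightforward: two arcs can cross only if they come from two distinct curves $\gamma,\gamma' \in \Gamma$, and assumption \eqref{itemlabel:lines:b} bounds the number of such crossings by $k\binom{n}{2} \le kn^2/2$. By Lemma \ref{lem:cross}, either $E < 4V$, in which case $I(\Pi,\Gamma) - n \le 4km$ and we are done, or
\[
\frac{E^3}{64\,V^2} \le \mathrm{cr}(G') \le \frac{k\,n^2}{2},
\]
which after substituting the lower bound for $E$ rearranges to $I(\Pi,\Gamma) \le C_{V,k}\bigl(m^{2/3}n^{2/3} + m + n\bigr)$ for an appropriate constant depending only on $k$ (through the factor $k^{4/3}$) and on the ambient data of $V$ (through the implicit constants inside Lemma \ref{lem:cross}).

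The main obstacle I anticipate is the construction of the definable drawing: I need to justify, using o-minimal cell decomposition, that each curve $\gamma \in \Gamma$ really does admit a finite, definable partition into monotone arcs on which the incidence points of $\Pi$ can be linearly ordered, so that the resulting arcs between consecutive incidence points form a definable embedding of $G'$ in the sense of Definition \ref{def:embedding}. A secondary technical point is that a single pair of vertices may be joined by arcs coming from different curves that share endpoints but differ in the interior: these must be properly disjointed before applying the crossing inequality, which is why the pruning step above is essential and why the constant $k$ enters quantitatively. Once these definable drawing issues are handled, the remaining computation is purely combinatorial and identical to Sz\'ekely's original proof.
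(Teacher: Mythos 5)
Your overall route is the same as the paper's (Sz\'ekely's argument via the definable crossing number inequality), and most of the bookkeeping is right, but there is one concrete gap: you apply Lemma~\ref{lem:cross} directly to the pruned incidence graph $G'$, while that lemma is stated --- and proved, via the Euler formula of Lemma~\ref{lem:euler}, which is itself only established for \emph{connected} definably planar graphs --- under the hypothesis that the graph is simple and \emph{connected}. The graph built from consecutive incidence points along the curves has no reason to be connected (disjoint clusters of points lying on disjoint families of curves give disconnected components), so the step ``either $\card(E)<4\card(V)$ or $\card(E)^3/(64\card(V)^2)\le \mathrm{cr}(G')$'' is not licensed as written. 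The paper's proof handles exactly this: it decomposes $G$ into its connected components $G_i$, applies Lemma~\ref{lem:cross} to each one to get $\card(E_i)\le C_0k^{1/3}m_i^{2/3}N_i^{2/3}+4m_i$, and then recombines these bounds using H\"older's inequality together with $\sum_i m_i=m$ and $\sum_i N_i\le Cn$; that summation step is genuinely needed and does not follow from the termwise bounds alone. An alternative repair is to extend Lemma~\ref{lem:cross} to arbitrary simple graphs (the probabilistic argument only uses $\mathrm{cr}(H)\ge \card(E(H))-3\card(V(H))$, which survives disconnection), but you would have to supply that extension, since the paper's version does not give it to you.

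A secondary, fixable issue is the quantitative accounting of the curve decomposition. You correctly flag that each $\gamma\in\Gamma$ (an arbitrary one-dimensional definable set) must first be cut into arcs definably homeomorphic to an interval before ``consecutive points'' makes sense; but you should also record that, because all the $\gamma$ are fibers of the fixed definable set $V$, the number of such pieces is bounded by a constant $C=C(V)$ uniformly in $\gamma$. Once the curves are replaced by the collection $\Gamma'$ of at most $Cn$ arcs, your losses become $O_V(n)$ rather than $n$, and a pair of points can be joined by up to $Ck$ parallel arcs rather than $k$; this only changes constants, and is exactly how the paper sets things up, but as written your inequalities $\sum_\gamma(n_\gamma-1)\ge I(\Pi,\Gamma)-n$ and $\card(E)\ge (I(\Pi,\Gamma)-n)/k$ pretend each $\gamma$ is a single embedded arc, which is not justified. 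The crossing count $k\binom{n}{2}$ and the final algebra are fine.
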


The rest of this section is devoted to proving Lemma \ref{lem:lines}. 
In order to prove Lemma \ref{lem:lines} we need to use the crossing number inequality (see Lemma \ref{lem:cross} below).
The proof of this inequality for topological embeddings of graphs in $\mathbb{R}^2$ is quite well-known (see Alon and Spencer~\cite[p. 285]{Alon-Spencer}) and applies in the definable context as well (for $\R = \mathbb{R}$).
Hence,  if one is only interested in the case $\R = \mathbb{R}$, one can skip the much of the rest of this section and proceed directly to
the proof of Lemma \ref{lem:lines}.

We begin with a basic result that we will use from algebraic topology over arbitrary o-minimal structures.

\subsection{Preliminaries  from o-minimal algebraic topology}
Singular homology and cohomology groups 
for definable sets of arbitrary o-minimal structures
have been defined  by Woerheide \cite{Woerheide}. This homology theory obeys the standard 
axioms of Eilenberg and Steenrod. In particular, there exist exact sequences for pairs and so on.  
 We will use the following result which is an immediate consequence of 
 Alexander-Lefschetz duality for definable manifolds obtained by Edmundo and Woerheide~\cite{Edmundo-Woerheide2009}.

\begin{proposition}
\label{prop:alexander}
Let $A$ be a closed and bounded definable subset of $\R^2$. Then, the number of definably connected components of
$\R^2 - A$ equals $ \dim_\Q \HH^1(A,\Q) + 1$.
\end{proposition}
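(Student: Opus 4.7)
The plan is to reduce the statement to the o-minimal Alexander--Lefschetz duality theorem of Edmundo--Woerheide~\cite{Edmundo-Woerheide2009}. Since $A$ is closed and bounded, I fix $R > 0$ large enough that $A$ is contained in the open disk of radius $R$, and set $B := \{x \in \R^2 : \|x\| \le R\}$. Then $B$ is a definably compact orientable $2$-manifold with boundary $\partial B$, and $A$ is closed in $B$ with $A \cap \partial B = \emptyset$. Alexander--Lefschetz duality applied to the pair $(B, A)$ should yield an isomorphism $\HH^1(A, \Q) \cong \HH_1(B, B \setminus A;\, \Q)$.

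Plugging this into the long exact sequence of the pair $(B, B \setminus A)$,
\[
\HH_1(B; \Q) \to \HH_1(B, B \setminus A; \Q) \to \HH_0(B \setminus A; \Q) \to \HH_0(B; \Q) \to 0,
\]
and using that $B$ is definably contractible (so $\HH_1(B;\Q) = 0$ and $\HH_0(B;\Q) \cong \Q$), together with the fact that the rightmost map is surjective because $B \setminus A \supseteq \partial B$ is nonempty, yields
\[
\dim_\Q \HH^1(A, \Q) \;=\; \dim_\Q \HH_0(B \setminus A;\, \Q) - 1.
\]
Since in Woerheide's o-minimal singular homology $\dim_\Q \HH_0(X;\Q)$ equals the number of definably connected components of a definable set $X$, the right-hand side equals $c-1$, where $c$ is the number of definably connected components of $B \setminus A$.

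It remains to argue that $B \setminus A$ and $\R^2 \setminus A$ have the same number of definably connected components. Because $A$ is strictly inside $B$, a thin annular neighborhood of $\partial B$ is contained in $B \setminus A$; this neighborhood is definably connected, so all components of $B \setminus A$ meeting $\partial B$ lie in a single ``outer'' component $C_0$. Outside $B$, the set $\R^2 \setminus B$ is definably connected and is glued to $C_0$ to form the unique unbounded component of $\R^2 \setminus A$, while the remaining components of $B \setminus A$ (all contained in the interior of $B$) are unaffected by this passage. This gives a bijection between the components of $B \setminus A$ and those of $\R^2 \setminus A$, completing the argument.

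The main obstacle I anticipate is matching the precise hypotheses of the Alexander--Lefschetz statement in \cite{Edmundo-Woerheide2009} to the pair $(B, A)$, since $B$ has boundary. If their theorem is formulated only for closed (boundaryless) definable manifolds, the workaround is to embed $\R^2$ as an open chart in the definable projective plane $\PP^2(\R)$, apply duality to the compact pair $(\PP^2(\R), A)$, and run the analogous long exact sequence using $\HH^1(\PP^2(\R);\Q) = 0$ and $\HH_0(\PP^2(\R);\Q) \cong \Q$; the structural computation is identical.
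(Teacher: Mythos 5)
Your argument is correct and follows essentially the same route as the paper: the paper applies the o-minimal Alexander--Lefschetz duality of Edmundo--Woerheide directly to the pair $(\R^2,\R^2-A)$ (their Theorem 3.5 already covers a definably compact $A$ inside the orientable definable manifold $\R^2$) and then runs the identical long exact sequence and $\HH_0$-counts-components argument, so your detour through the closed disk $B$ and the transfer of components from $B\setminus A$ to $\R^2\setminus A$ is sound but unnecessary. One caution about your proposed fallback: $\PP^2(\R)$ is non-orientable, so Alexander duality with $\Q$-coefficients is not available there (and $\HH_2(\PP^2(\R),\Q)=0$); the correct compactification would be the definable sphere, though again this is not needed.
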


\begin{proof}
It follows from the Alexander-Lefschetz duality theorem for definable manifolds \cite[Theorem 3.5]{Edmundo-Woerheide2009} that there is an isomorphism,
\begin{equation}
\label{eqn:prop:alexander:1}
\HH^1(A,\Q) \cong \HH_1(\R^2,\R^2- A,\Q).
\end{equation}

It follows now from \eqref{eqn:prop:alexander:1} and the homology exact sequence of the pair $(\R^2,\R^2 - A)$  (see for example \cite[\S 5]{Edmundo-Woerheide2008}), namely,
\[
\cdots  0 \cong \HH_1(\R^2,\Q ) \rightarrow \HH_1(\R^2,\R^2 - A,\Q) \rightarrow 
\HH_0(\R^2 - A,\Q) \rightarrow  \HH_0(\R^2,\Q) \cong \Q \cdots,
\]
that 
\begin{eqnarray*}
\dim_\Q \HH_0(\R^2 - A,\Q) &=& \dim_\Q \HH_1(\R^2,\R^2 - A,\Q)  +   \dim_\Q  \HH_0(\R^2,\Q) 
\end{eqnarray*}

It now follows from \eqref{eqn:prop:alexander:1}, and the fact that
\[
\dim_\Q  \HH_0(\R^2,\Q)  = 1,
\]
that
\[
\dim_\Q \HH_0(\R^2 - A,\Q) = \dim_\Q \HH^1(A,\Q) + 1.
\]
Finally, observe that 
that for any definable set $X$, $\dim_\Q \HH_0(X,\Q)$ equals the number of definable connected components of $X$. For $X$ closed and bounded, this is a consequence
of the fact $X$ admits a finite definable triangulation, 
the isomorphism between o-minimal simplicial and singular homology \cite[Theorem 1.1]{Edmundo-Woerheide2008},
and the corresponding fact for simplicial homology theory.  An arbitrary definable set is definably homotopy equivalent to
a closed and bounded one, and the fact that  $\dim_\Q \HH_0(X,\Q)$ equals the number of definable connected components of $X$, follows from the above and the homotopy 
invariance property of o-minimal singular homology \cite[\S 5]{Edmundo-Woerheide2008}. 
The proof is now complete once we note that
$\dim_\Q \HH_0(\R^2 -A,\Q)$ equals the number of definably connected components of $\R^2 -A$.
\end{proof}

We next introduce some standard notation and definitions from graph theory but adapted to the o-minimal context.

\subsection{Graph-theoretic notation and definition}
\begin{notation}
A {\it simple graph} $G$ is a pair $(V(G),E(G))$ where $V(G)$ is a finite nonempty set and 
$E(G)$ is a set of subsets of $V(G)$, each of cardinality 2.
We refer to the elements of $V(G)$ and $E(G)$ as {\it vertices} and {\it edges}, respectively. 
A {\it path} in $G$ is a sequence $(w_1,\ldots,w_r)$ of elements of $V(G)$, such that $\{w_i,w_{i+1}\}\in E(G)$, for every $1\le i\le r-1$.
A {\it cycle} in $G$ is a path $(w_1,\ldots,w_r)$ such that $w_1=w_r$.
We say that $G$ is {\it connected}, if for every $u\neq v\in V(G)$ there exists a
path $(w_1,\ldots,w_r)$ in $G$ such that $w_1=u$ and $w_r=v$.
\end{notation}

\begin{definition}
\label{def:embedding}
Let $G=(V, E)$ be a simple graph.
A \emph{definable embedding}, $\phi_G$,  of $G$ in $\R^2$ consists of 
\begin{enumerate}
\item
A finite subset $V_{\phi_G} \subset \R^2$, each of whose elements is labeled by a unique element of $V$
(abusing notation we will denote each element of $V_{\phi_G}$ by its label);
\item
For each edge $e = \{v,v'\} \in E$, a continuous definable embedding $\phi_e:[0,1] \rightarrow \R^2$, satisfying
$\{\phi_e(0),\phi_e(1)\}=\{v,v'\}$ and $\phi_e(t)\not\in V_{\phi_G}$, for every $t\in (0,1)$.
We denote by $\eta_e$ the image of $\phi_e$ in $\R^2$, and by $\mathring{\eta_e}$
the image of $\phi_e|_{(0,1)}$.
We denote by  $E_{\phi_G}$ the set $\cup_{e \in E} \{ \eta_e \}$. 
\end{enumerate}

For a definable embedding $\phi_G$, we will denote by $\Im(\phi_G)$ the closed and bounded 
definable set $(\displaystyle\bigcup_{e\in E}\eta_e)\cup V_{\phi_G}$. We denote by $F_{\phi_G}$ 
the set of definably connected components of 
$\R^2\setminus \Im(\phi_G)$, and refer to an element of $F_{\phi_G}$ as a \emph{face} of the embedding.
\end{definition}

\begin{definition}
\label{def:crossing}
Given a definable embedding $\phi_G$ of $G$ in $\R^2$, we define the set 
\[
EC(\phi_G) = \{(e,e') \in E(G)^2\mid e\neq e'~\text{and}~\mathring{\eta}_e\cap\mathring{\eta}_{e'}\neq\emptyset\}
\]
We define the \emph{crossing number}, $\mathrm{cr}(G)$, of $G$ by
\[
\mathrm{cr}(G) = \min_{\phi_G} \card(EC(\phi_G)),
\]
where the $\min$ is taken over all definable embeddings of $G$. 
Clearly,  $\card(EC(\phi_G))\le (\card(E(G)))^2$, for any embedding $\phi_G$ of $G$, 
and hence $\mathrm{cr}(G)$ is finite.

If $\mathrm{cr}(G)=0$ we call $G$ \emph{definably planar}.
\end{definition}

We are now in a position to prove the o-minimal version of the crossing number inequality.
We begin with some basic results.

\begin{lemma}\label{0edge}
Let $G$ be a simple connected graph. Assume that $G$ is definably planar and let 
$\phi_G$ be a definable embedding of $G$ in $\R^2$
such that $\card(EC(\phi_G))=0$.
Let $\eta\in E_{\phi_G}$ and $f\in F_{\phi_G}$.
If $\mathring{\eta}\cap \clos(f)\neq \emptyset$, then
$\eta\subset \clos(f)$.
\end{lemma}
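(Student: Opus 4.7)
My plan is to reduce the statement to a connectedness argument on the parameter interval $(0,1)$ of the edge $\eta$. Fix $e\in E(G)$ with $\eta_e=\eta$, let $\phi_e:[0,1]\to\R^2$ be its parametrization, and consider
\[
S := \{\, t \in (0,1) \mid \phi_e(t) \in \clos(f)\,\}.
\]
Since $\clos(f)$ is closed in $\R^2$ and $\phi_e$ is continuous, $S$ is closed in $(0,1)$; the hypothesis $\mathring{\eta}\cap\clos(f)\neq\emptyset$ gives $S\neq\emptyset$. If I can prove that $S$ is also open in $(0,1)$, then by connectedness $S=(0,1)$, so $\mathring{\eta}\subset\clos(f)$, and taking closures yields $\eta=\clos(\mathring{\eta})\subset\clos(f)$.

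For openness, fix $t_0\in S$ and set $x:=\phi_e(t_0)$. First I isolate $\eta$ locally near $x$. Since $x\in\mathring{\eta}$, $x$ is not a vertex, and the assumption $\card(EC(\phi_G))=0$ forces $x\notin \mathring{\eta}_{e'}$ for every edge $e'\neq e$; together with $x\notin V_{\phi_G}$ this gives $x\notin\eta_{e'}$ for every $e'\neq e$. Because each $\eta_{e'}$ is closed and $E(G)$ is finite, I can choose $r>0$ so small that the open ball $B:=B(x,r)$ is disjoint from $V_{\phi_G}$ and from every $\eta_{e'}$ with $e'\neq e$; thus $B\cap\Im(\phi_G)=B\cap\mathring{\eta}$. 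I then invoke the o-minimal local structure of the definable arc $B\cap\mathring{\eta}$: by definable cell decomposition of $B$ adapted to $B\cap\mathring{\eta}$, and after shrinking $r$ if necessary, $B\setminus\mathring{\eta}$ has finitely many definably connected components $C_1,\ldots,C_k$, and each has $B\cap\mathring{\eta}$ entirely in its closure.

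Granting the local claim, openness at $t_0$ is immediate. Since $x\in\clos(f)$, every neighborhood of $x$ meets $f$, so some component $C_i$ meets $f$; as $C_i$ is definably connected and disjoint from $\Im(\phi_G)$ (a component of $\R^2\setminus\Im(\phi_G)$ restricted to $B$), it is contained in $f$. Hence $B\cap\mathring{\eta}\subset\clos(C_i)\subset\clos(f)$, and the open set $\phi_e^{-1}(B)\cap(0,1)$ is a neighborhood of $t_0$ contained in $S$. The main obstacle is precisely the local structure claim of the previous paragraph: over $\R=\mathbb R$ it follows from the existence of a tubular neighborhood of a simple arc, but for a general real closed field it has to be extracted from o-minimal cell decomposition of $B$ adapted to $B\cap\mathring{\eta}$, together with the fact that $\mathring{\eta}$ is the definable homeomorphic image of the interval $(0,1)$, so that $B\cap\mathring{\eta}$ is itself a simple definable arc through $x$. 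Once this local fact is granted, the argument closes.
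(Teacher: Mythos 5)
Your overall strategy is sound and is really the paper's argument repackaged: where you run a clopen argument on the definable set $S=\{t\in(0,1)\mid \phi_e(t)\in\clos(f)\}$, the paper sets $a=\inf\{t>t_0\mid \phi_e(t)\notin\clos(f)\}$ and derives a contradiction from the local structure at the single point $\phi_e(a)$. One small but necessary correction first: over a general real closed field the interval $(0,1)$ is not connected as a topological space, so ``nonempty, open and closed implies everything'' must be replaced by \emph{definable} connectedness of $(0,1)$ applied to the \emph{definable} set $S$. This does work --- $f$, being a definably connected component of the definable set $\R^2\setminus\Im(\phi_G)$, is definable, hence so are $\clos(f)$ and $S$ --- but in a paper whose whole point is to work over arbitrary $\R$ it has to be said.

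The substantive gap is the local claim you yourself flag as the main obstacle: that for small $r$ \emph{every} definably connected component of $B(x,r)\setminus\mathring{\eta}$ contains all of $B(x,r)\cap\mathring{\eta}$ in its closure. This is not a routine consequence of ``definable cell decomposition adapted to $B\cap\mathring{\eta}$'': a cell decomposition only tells you that each one-dimensional cell of the arc lies in the closure of the two sectors immediately adjacent to it, and the assertion that a single component of the complement is adjacent to the \emph{whole} arc requires propagating this adjacency along the arc across the walls of the decomposition. That local two-sidedness of a simple definable arc is essentially the content of the lemma itself, so as written you have pushed the entire difficulty into an unproved claim. The paper supplies exactly the missing ingredient with its fan-of-sectors argument: it takes a cylindrical decomposition adapted to $x_0$ and $\Im(\phi_G)$, orders the one-dimensional cells $\gamma_0,\dots,\gamma_N=\gamma_0$ and sectors $s_0,\dots,s_{N-1}$ around $x_0$, observes that the edge enters along $\gamma_0$ and leaves along $\gamma_j$ while every other $\gamma_k$ misses $\Im(\phi_G)$ (so consecutive sectors separated by such a $\gamma_k$ lie in the same face), and concludes that the sectors split into two blocks lying in faces $f_0$ and $f_j$, one of which must be $f$; hence $\gamma_0,\gamma_j\subset\clos(f)$ and $\phi_e((t_0-\eps,t_0+\eps))\subset\clos(f)$. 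Note this gives your openness step directly at each $t_0\in S$, without needing the full strength of your claim (you need neither that $B\cap\mathring{\eta}$ is a single arc nor that there are exactly two components). With that argument inserted, your proof closes and coincides with the paper's.
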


\begin{proof}
For contradiction, assume that $\mathring \eta\cap \clos(f)\neq \emptyset$ but
$\eta\not\subset\clos(f)$. Clearly, in this case also $\mathring \eta\not\subset\clos(f)$.

By definition, $\eta$ is the image of a definable continuous 
function $\phi:[0,1]\to\R^2$.
Then for some
$a\in (0,1)$ we have $\phi(a)\in \mathring \eta\cap\clos(f)$ but
$$
\phi([a-\eps,a+\eps])\not\subset \clos(f),
$$
for every $\eps>0$ arbitrarily small.
Indeed, by our assumption there exists $t_0\in (0,1)$ such that 
$\phi(t_0)\in \clos(f)$.

Then one of the sets 
$\{t\in(t_0,1)\mid \phi(t)\not\in \clos(f)\}$ or 
$\{t\in(0,t_0)\mid \phi(t)\not\in \clos(f)\}$
is nonempty (otherwise, $\mathring\eta\subset \clos(f)$).
Suppose without loss of generality that 
$\{t\in(t_0,1)\mid \phi(t)\not\in \clos(f)\} \neq \emptyset$.
Let 
$$
a:=\inf\{t\in(t_0,1)\mid \phi(t)\not\in \clos(f)\}.
$$

Put $x_0:=\phi(a)$, and consider a cylindrical definable cell decomposition, $\mathcal{D}$,  of $\R^2$,
satisfying the frontier condition and adapted to 
$x_0$ and 
$\Im(\phi_G)$ (see \cite[Theorem 3.20]{BGV} for the existence of such a cylindrical definable decomposition).

Consider the cells of $\mathcal{D}$ that contain $x_0$ in their closure. 
By the structure of a two-dimensional cylindrical decomposition, we can order these cells (say in a counter-clockwise direction).
Let $\gamma_0,\ldots,\gamma_N=\gamma_0$ be the ordered sequence of one-dimensional cells (note that each $\gamma_i$ is either the
graph of a definable continuous function of the first coordinate or a vertical segment). Let $s_0,\ldots,s_{N-1}$ 
be the sector (i.e. two-dimensional) cells, such that $\gamma_i,\gamma_{i+1} \subset \clos(s_i)$. 
Since $\mathcal{D}$ is assumed to be adapted to $x_0$ and $\Im(\phi_G)$, there exist $i,j, 0 \leq i,j \leq N, i \neq j$,
such that  for all  small enough $\eps>0$,
$\gamma_i \supset \phi((a-\eps,a))$ and
$\gamma_j \supset \phi((a,a+\eps))$.
Without loss of generality we can assume that $i=0$.

Now observe that  $\gamma_k \cap \Im(\phi_G) = \emptyset$ for all $k \neq 0,j$. Moreover, if $\gamma_k \cap \Im(\phi_G) =\emptyset$,
then the sector cells $s_{k-1},s_k$ are contained in the same face of $\phi_G$. 
That is, $s_0,\ldots, s_{j-1}$ are contained in some $f_0\in F_{\phi_G}$, and 
$s_j,\ldots, s_{N-1}$ are contained in some $f_j\in F_{\phi_G}$ (possibly, $f_0=f_j$).
Finally, one of the sector cells must be contained in $f$ since $x_0 \in \clos(f)$. 
Thus $f\in\{f_0,f_j\}$. Suppose without loss  of generality that
$f=f_0$. 
So all the sector cells $s_0,s_1,\ldots,s_{j-1}$ must be contained in $f$, 
and this implies that $\gamma_0,\gamma_j \subset \clos(f)$, 
which is a contradiction.
\end{proof}

\begin{lemma}\label{atmost2faces}
Let $G$ be a simple connected graph. Assume that $G$ is definably planar and let 
$\phi_G$ be a definable embedding of $G$ in $\R^2$
such that $\card(EC(\phi_G))=0$.
Then every $\eta\in E_{\phi_G}$ is contained in the closure of at most 
two distinct faces $f,f'\in F_{\phi_G}$.
\end{lemma}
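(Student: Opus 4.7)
The plan is to reduce the global counting statement to a local analysis at a single interior point of the edge $\eta$, exactly as in the proof of Lemma~\ref{0edge}, using Lemma~\ref{0edge} itself to convert local information into global information.

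First I would fix $\eta\in E_{\phi_G}$ and choose any point $x_0\in \mathring{\eta}$. Since $x_0\notin V_{\phi_G}$ and $\card(EC(\phi_G))=0$, no other edge of the embedding passes through $x_0$; that is, $x_0\in \Im(\phi_G)$ lies only on $\eta$. Next I would choose a cylindrical definable cell decomposition $\mathcal{D}$ of $\R^2$, satisfying the frontier condition and adapted to both $\{x_0\}$ and $\Im(\phi_G)$, again via \cite[Theorem~3.20]{BGV}.

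As in the proof of Lemma~\ref{0edge}, list the cells of $\mathcal{D}$ having $x_0$ in their closure in cyclic order: one-dimensional cells $\gamma_0,\ldots,\gamma_{N}=\gamma_0$, and sector cells $s_0,\ldots,s_{N-1}$ with $\gamma_i,\gamma_{i+1}\subset \clos(s_i)$. Because $\mathcal{D}$ is adapted to $\Im(\phi_G)$, each $\gamma_i$ is either contained in $\Im(\phi_G)$ or disjoint from it; and because only $\eta$ passes through $x_0$, exactly two of the $\gamma_i$'s are contained in $\Im(\phi_G)$ (the two local branches of $\eta$ at $x_0$). Say these are $\gamma_0$ and $\gamma_j$. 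Then for every $k\notin\{0,j\}$ the cell $\gamma_k$ is disjoint from $\Im(\phi_G)$, so the adjacent sector cells $s_{k-1}$ and $s_k$ lie in the same definably connected component of $\R^2\setminus \Im(\phi_G)$, i.e.\ in the same face. Consequently $s_0,\ldots,s_{j-1}$ all lie in one face $f_0\in F_{\phi_G}$ and $s_j,\ldots,s_{N-1}$ all lie in one face $f_1\in F_{\phi_G}$, and every face of $\phi_G$ whose closure contains $x_0$ must be one of $f_0$ or $f_1$.

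Finally I would invoke Lemma~\ref{0edge} in the form: if $\eta\subset \clos(f)$ for some $f\in F_{\phi_G}$, then in particular $x_0\in \clos(f)$, so $f\in\{f_0,f_1\}$. Therefore at most two faces can have $\eta$ in their closure, which is the desired conclusion. I do not expect any genuine obstacle here: the only subtlety is ensuring that at $x_0$ exactly two one-dimensional cells of $\mathcal{D}$ lie in $\Im(\phi_G)$, and this is precisely where the hypothesis $\card(EC(\phi_G))=0$ combined with $x_0\in \mathring{\eta}\setminus V_{\phi_G}$ is used.
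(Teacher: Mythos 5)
Your proposal is correct and follows essentially the same route as the paper: pick an interior point $x_0$ of $\eta$, take a cylindrical definable cell decomposition adapted to $x_0$ and $\Im(\phi_G)$, observe that the sector cells around $x_0$ fall into at most two faces, and conclude via $x_0\in\eta$. The only step you state more briefly than the paper is the exclusion of a third face $f''$ with $x_0\in\clos(f'')$; the paper justifies this by noting that a neighborhood of $x_0$ lies in $\clos(f_0\cup f_1)$ and that $f''$ is definably open, which is exactly the observation implicit in your ``consequently.''
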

\begin{proof}
Let $\eta\in E_{\phi_G}$ and 
let 
$x_0 = \phi_e(a) \in\mathring\eta$.

Let $\gamma_0,\ldots,\gamma_{N-1}$ and $s_0,\ldots,s_{N-1}$ be as in the proof of Lemma
\ref{0edge}. 
Also without loss of generality assume that
$\gamma_0 \supset \phi_e((a-\eps,a))$, and let $1 < j < N$ such that
$\gamma_j \supset \phi_e((a,a+\eps))$, for all  small enough $\eps>0$.

Then, following the same argument as in the proof of Lemma \ref{0edge}, we get that
$s_0,\ldots,s_{j-1}$ must be contained in the same face, say $f$, of $\phi_G$, and 
$s_j
,\ldots,
s_{N-1}
$ must also be contained in the same face, say $f'$ of $\phi_G$ (with possibly $f=f'$). 
Moreover, it follows that $x_0$ is an interior point of $\clos(f\cup f')$, and there exists a 
definable 
neighborhood $B$ of $x_0$ contained in $\clos(f\cup f')$.
Now consider any $f'' \in F_{\phi_G}\setminus\{f,f'\}$. We have $f''\cap (f\cup f')=\emptyset$.  We claim that $x_0\not\in \clos(f'')$.
Otherwise, $f''$ has a non-empty intersection with every definable neighborhood of $x_0$, and in particular $B \cap f'' \neq \emptyset$.
But $B\cap f''$ is definably open and contained in $\clos(f\cup f')$, and hence $B \cap f'' \neq \emptyset$ implies that 
$B\cap f'' \cap (f \cup f') \neq \emptyset$ as well. However, this is a contradiction since $f''$ is disjoint from $f\cup f'$.
It follows that $x_0\not\in \clos(f'')$.
We conclude that $x_0$ is contained in the closure of at most two faces of $\phi_G$ and thus the same holds for $\eta$.
\end{proof}

\begin{lemma}\label{2faces}
Let $G$ be a simple connected graph. Assume that $G$ is definably planar and let 
$\phi_G$ be a definable embedding of $G$ in $\R^2$
such that $\card(EC(\phi_G))=0$.
Assume further that $\card(F_{\phi_G})\ge 2$. Then 
there exists $\eta\in E_{\phi_G}$ 
such that
$\eta\subset \clos(f)\cap \clos(f')$
for some distinct $f,f'\in F_{\phi_G}$.
\end{lemma}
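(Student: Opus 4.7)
The plan is to argue by contradiction: assume no edge $\eta\in E_{\phi_G}$ is contained in the closure of two distinct faces. Combined with Lemma~\ref{0edge}, Lemma~\ref{atmost2faces}, and the fact that $\Im(\phi_G)$ is one-dimensional in $\R^2$ and therefore has empty interior (by o-minimal dimension theory), this forces every edge $\eta$ to lie in the closure of exactly one face: any interior point $x\in\mathring\eta$ is a limit of points outside $\Im(\phi_G)$, so $x\in\clos(f)$ for some $f$, and then Lemma~\ref{0edge} propagates this to $\eta\subset\clos(f)$; uniqueness is the contradiction hypothesis.

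Fix an arbitrary $f_0\in F_{\phi_G}$ and set
\[
T := \clos(f_0)\setminus V_{\phi_G}, \qquad X := \R^2\setminus V_{\phi_G}.
\]
I would show that $T$ is both closed and open in $X$, and then conclude via connectedness of $X$. Closedness is immediate: $\clos_X(T)=\clos(f_0)\cap X=T$, since each of the finitely many points of $V_{\phi_G}\cap\clos(f_0)$ is a limit of points of $\clos(f_0)\setminus V_{\phi_G}$. For openness of $T$ at a point $x\in T$: if $x\in f_0$ there is nothing to do; if $x\in\mathring\eta$, then Lemma~\ref{0edge} gives $\eta\subset\clos(f_0)$, and re-running the cylindrical-decomposition and sector analysis from the proof of Lemma~\ref{atmost2faces} produces a definable neighborhood $U$ of $x$ whose sectors all lie in $f_0$ (by the contradiction hypothesis, which forces every face touching $\eta$ to equal $f_0$) and whose intersection with $\Im(\phi_G)$ lies in $\mathring\eta\subset\clos(f_0)$. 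Hence $U\subset\clos(f_0)$, and shrinking $U$ to avoid the finite set $V_{\phi_G}$ yields $U\subset T$.

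Finally, $X$ is definably connected: any two points can be joined by a definable polygonal path through a generic intermediate point avoiding the finite vertex set $V_{\phi_G}$. Thus the nonempty clopen subset $T$ of $X$ equals $X$, which gives $\R^2\setminus\Im(\phi_G)\subset f_0$, forcing $F_{\phi_G}=\{f_0\}$ and contradicting $\card(F_{\phi_G})\ge 2$. The main technical obstacle is the openness of $T$ at an edge-interior point: this is not a consequence of the bare statement of Lemma~\ref{atmost2faces}, but rather of the explicit local sector picture constructed inside its proof, which must be extracted and reused. Once that local picture is in hand the remainder of the argument is soft point-set topology.
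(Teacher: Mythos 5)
Your proof is correct, but it takes a genuinely different route from the paper's. The paper argues directly: it uses Proposition~\ref{prop:alexander} to see that $\R^2\setminus V_{\phi_G}$ is definably connected, joins two points of distinct faces $f_p,f_q$ by a definable path avoiding $V_{\phi_G}$, and examines the last parameter $t_0$ at which the path lies in $\clos(f_p)$; the point $\tau(t_0)$ must lie on some $\mathring\eta$, and a dimension argument (every neighborhood of $\tau(t_0)$ meets $\R^2\setminus(\clos(f_p)\cup\Im(\phi_G))$ because $\Im(\phi_G)$ is one-dimensional) produces a second face $f'$ with $\tau(t_0)\in\clos(f')$, after which Lemma~\ref{0edge} upgrades both memberships to $\eta\subset\clos(f_p)\cap\clos(f')$. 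You instead argue by contradiction with a clopen-set argument: if no edge bounds two faces, then $\clos(f_0)\setminus V_{\phi_G}$ is clopen in $\R^2\setminus V_{\phi_G}$, forcing a single face. Both arguments lean on the same two ingredients --- Lemma~\ref{0edge} and the local sector picture from the cylindrical decomposition --- but you correctly flag that the step you need (a definable neighborhood of an edge-interior point contained in $\clos(f\cup f')$) is only available from inside the proof of Lemma~\ref{atmost2faces}, not from its statement; that intermediate conclusion is explicitly asserted there, so extracting it is legitimate. Two small points you gloss over, both easily repaired: you should note that $\clos(f_0)\setminus f_0\subset\Im(\phi_G)$ (the paper proves this inside its own proof via definable curve selection), which is what guarantees your two openness cases ($x\in f_0$ or $x\in\mathring\eta$) exhaust $T$; and your elementary polygonal-path proof that $\R^2\setminus V_{\phi_G}$ is definably connected is a fine substitute for the paper's appeal to Proposition~\ref{prop:alexander}. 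The paper's approach is constructive and shorter given the machinery already in place; yours is softer point-set topology and avoids the explicit path/supremum analysis at the cost of reopening the sector argument.
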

\begin{proof}
Since $\card(F_{\phi_G})\ge 2$, there exist 
$p,q\in \R^2$ that lie in some distinct faces $f_p,f_q\in F_{\phi_G}$.
Since $V_{\phi_G}$ is a finite set of points, and hence $\HH^1(V_{\phi_G},\Q) = 0$,
we have using Proposition \ref{prop:alexander} that
$\R^2 \setminus V_{\phi_G}$ is definably connected.
Consider a 
definable path $\tau:[0,1]\to \R^2\setminus V_{\phi_G}$ 
such that $\tau(0)=p$ and $\tau(1)=q$.
Note that, for every $f\in F_{\phi_G}$, we have $\clos(f)\setminus f\subset \Im(\phi_G)$.
Indeed, for $x\in \clos(f)\setminus f$ 
it follows immediately from the definable curve selection lemma  \cite[Theorem 2]{Michel2} that
$f \cup \{x\}$ is definably connected, and since $f$ is a definably connected component of 
$\R^2 \setminus \Im(\phi_G)$, $x \in \Im(\phi_G)$.
 
In particular, for $p,q$ as above, we have $q\not\in\clos(f_p)$.

Let 
$$
t_0:=\sup\{t\in [0,1]\mid \tau(t)\in\clos(f_p)\}.
$$
Since $q\not\in \clos(f_p)$, we have $t_0<1$.
So $\tau(t_0)\in\clos(f_p)$ and $\tau(t)\not\in{\rm clos }(f_p)$, for every $t\in (t_0,1]$.

By construction, no neighborhood of $\tau(t_0)$ is contained in $f_p$, and hence 
necessarily $\tau(t_0)\in \clos(f_p)\setminus f_p$. By our argument above, 
$\tau(t_0)\in \Im(\phi_G)$.
Moreover, since the image of $\tau$ avoids vertices in $V_{\phi_G}$, we have
$\tau(t_0)\in \mathring{\eta}$, for some $\eta\in E_{\phi_G}$.
By Lemma~\ref{0edge}, we get $\eta\subset \clos(f_p)$.

Note that every definable open neighborhood $B$ of $\tau(t_0)$, as it is not contained in $f_p$, must
have a non-empty intersection with $\R^2\setminus \clos(f_p)$,  
and this intersection is a definable open set.
Since $\Im(\phi_G)$
is a one-dimensional definable set, every such neighborhood $B$
necessarily intersects $\R^2\setminus (\clos(f_p)\cup \Im(\phi_G))$.
That is, 
$$
\tau(t_0)\in \clos\left(\R^2\setminus \left(\clos(f_p)\cup \Im(\phi_G)\right)\right).
$$

Since the closure of a definable set is the union of the closures of  its definably connected components,
it follows that 
$\tau(t_0)$ is in the closure of some definably
connected component of $\R^2\setminus \left(\clos(f_p)\cup \Im(\phi_G)\right)$.

Note that $\R^2\setminus\left(\clos(f)\cup \Im(\phi_G)\right) = \cup_{f \in F_{\phi_G} \setminus \{f_p\}} f$.
That is, $\tau(t_0)\in \clos(f')$, for some $f'\in F_{\phi_G}\setminus\{f_p\}$.
By Lemma~\ref{0edge}, we have $\eta\subset \clos(f')$.
This completes the proof.
\end{proof}

\begin{lemma}\label{3edges}
Let $G$ be a simple connected graph. Assume that $G$ is definably planar and let 
$\phi_G$ be a definable embedding of $G$ in $\R^2$
such that $\card(EC(\phi_G))=0$.
If $\card(E(G))\ge 3$, 
then, for every face $f\in F_{\phi_G}$, the closure $\clos(f)$
contains at least three edges of $G$.
\end{lemma}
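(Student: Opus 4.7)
The plan is to argue by contradiction: assume $\clos(f)$ contains at most two edges of $G$, and show this forces $f = \R^2 \setminus \partial f$, contradicting the existence of a third edge. The argument has two main ingredients. First, I would pin down $\partial f$ exactly as the union of the edges it contains, using Lemma~\ref{0edge} together with a local analysis at vertices. Second, I would invoke Proposition~\ref{prop:alexander} (o-minimal Alexander-Lefschetz duality) to count the definably connected components of $\R^2 \setminus \partial f$.

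Set $\mathcal{E}_f := \{e \in E(G) \mid \eta_e \subset \clos(f)\}$ and $A := \bigcup_{e \in \mathcal{E}_f} \eta_e$. The inclusion $A \subset \partial f$ is immediate, since each $\eta_e \subset \clos(f)$ is disjoint from $f$. For $\partial f \subset A$, take $x \in \partial f \subset \Im(\phi_G)$. If $x \in \mathring{\eta}_e$, then Lemma~\ref{0edge} gives $\eta_e \subset \clos(f)$, so $x \in A$. If $x \in V_{\phi_G}$, choose a small ball $B$ around $x$ containing no other vertex of $V_{\phi_G}$, and take a cylindrical definable cell decomposition of $B$ adapted to $\Im(\phi_G) \cap B$. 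Because $x \in \partial f$, some definably connected component $s$ of $B \setminus \Im(\phi_G)$ lies in $f$ and satisfies $x \in \clos(s)$. Since $s$ is two-dimensional, its boundary near $x$ is one-dimensional and must meet some arc of $\mathring{\eta}_e$ for an edge $e$ incident to $x$; that arc lies in $\clos(s) \subset \clos(f)$, so Lemma~\ref{0edge} yields $\eta_e \subset \clos(f)$ and hence $x \in \eta_e \subset A$. Thus $\partial f = A$.

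Now suppose, for contradiction, that $|\mathcal{E}_f| \leq 2$. If $|\mathcal{E}_f| = 0$, then $\partial f = \emptyset$, so $f$ is clopen in $\R^2$, forcing $f = \R^2$ and contradicting $\Im(\phi_G) \neq \emptyset$. If $|\mathcal{E}_f| = 1$, then $A$ is a single arc homeomorphic to $[0,1]$, so $\HH^1(A, \Q) = 0$. If $|\mathcal{E}_f| = 2$, say $\mathcal{E}_f = \{e, e'\}$, then because $G$ is simple two distinct edges cannot share both endpoints, so $\eta_e$ and $\eta_{e'}$ share at most one vertex; in either situation $A$ is definably homotopy equivalent to a finite discrete set, giving $\HH^1(A, \Q) = 0$. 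By Proposition~\ref{prop:alexander}, $\R^2 \setminus A$ has exactly one definably connected component in each case.

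To conclude, $f$ is a non-empty open subset of $\R^2 \setminus A$, and the identity $\clos_{\R^2}(f) = f \cup A$ established above shows $f$ is also closed in $\R^2 \setminus A$; definable connectedness then forces $f = \R^2 \setminus A$. Since $|E(G)| \geq 3$, we may pick $e'' \in E(G) \setminus \mathcal{E}_f$. The hypothesis $\card(EC(\phi_G)) = 0$ combined with $\mathring{\eta}_{e''} \cap V_{\phi_G} = \emptyset$ yields $\mathring{\eta}_{e''} \cap A = \emptyset$, so $\mathring{\eta}_{e''} \subset \R^2 \setminus A = f$; but $\mathring{\eta}_{e''} \subset \Im(\phi_G)$ is disjoint from $f$, the desired contradiction. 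The most delicate step is the vertex case in the identification of $\partial f$, where one has to exploit the cylindrical decomposition carefully to guarantee that a sector of $f$ accumulating at $x \in V_{\phi_G}$ actually has an arc of some edge incident to $x$ on its boundary.
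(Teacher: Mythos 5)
Your proof is correct and rests on the same homological core as the paper's: both arguments use Lemma~\ref{0edge} to identify the part of $\Im(\phi_G)$ meeting $\clos(f)$ as a union of whole edges, and then invoke Proposition~\ref{prop:alexander}. The paper organizes this contrapositively --- it forms the boundary subgraph $G_f$, shows that if $G_f$ has a single face then $\clos(f)=\R^2$ (so every edge lies in $\clos(f)$), and otherwise deduces via Lemma~\ref{lem:alexander} that $G_f$ contains a cycle and hence at least three edges --- whereas you compute $\HH^1(A,\Q)=0$ directly when $A$ consists of at most two edges and derive a contradiction with a third edge; these are equivalent framings, and your case analysis ($0$, $1$, or $2$ edges, using simplicity of $G$ and $\card(EC(\phi_G))=0$ to control $\eta_e\cap\eta_{e'}$) is sound. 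The only imprecise spot is the one you flag: for $x\in\partial f\cap V_{\phi_G}$, the boundary of a single sector $s\subset f$ accumulating at $x$ need not meet $\Im(\phi_G)$ at all (both of its bounding arcs may be ``free''), so you must walk through the cyclically adjacent sectors --- which remain in $f$ as long as the separating one-dimensional cells avoid $\Im(\phi_G)$ --- until you reach a cell contained in some $\mathring{\eta}_e$ with $e$ incident to $x$; alternatively, you can sidestep the vertex case entirely by replacing $A$ with $A\cup(V_{\phi_G}\cap \partial f)$, which adds only finitely many points and therefore changes neither $\HH^1$ nor the connectedness of the complement.
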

\begin{proof}
Fix $f\in F_{\phi_G}$, and put $Z:=\Im(\phi_G)\cap \clos(f)$.
By Lemma~\ref{0edge}, $Z$ is a union of some elements of $E_{\phi_G}\cup V_{\phi_G}$.
Let $G_f$ denote the abstract graph that corresponds to the elements
composing $Z$, in the above sense, and let $\phi_{G_f}$ denote the 
definable embedding induced by $\phi_G$ restricted to the elements of $G_f$.

Observe that necessarily $f\in F_{\phi_{G_f}}$. Indeed, a definable path from a point of $f$
to a point of $\R^2\setminus (\Im(\phi_G)\cup f)$ necessarily intersect $\clos(f)\setminus f$.

Assume first that ${\rm card}(F_{\phi_{G_f}})=1$, that is, $F_{\phi_{G_f}}=\{f\}$.
Then $\clos(f)=\R^2$, and hence $f$ intersects every open subset
of $\R^2\setminus \Im(\phi_{G_f})$. Since the intersection is open (and nonempty)
and $\Im(\phi_{G})$ is one-dimensional, $f$ must have a non-empty intersection
also with $\R^2\setminus \Im(\phi_{G})$. Hence, ${\rm card}(F_{\phi_{G}})=1$
and $\clos(f)=\R^2$.
In particular, $\eta\subset \clos(f)$, for every $\eta\in E_{\phi_G}$.
This completes the proof for this case.

Assume next that ${\rm card}(F_{\phi_{G_f}})\ge 2$.
It follows from Lemma \ref{lem:alexander} that $G_f$ contains a cycle.
Hence, $\card(E_{G_f})\ge 3$.
\end{proof}

\begin{lemma}
\label{lem:alexander}
If $T$ is a non-empty, connected graph without cycles, then $\card(F_{\phi_T})=1$, for any definable embedding $\phi_T$.
\end{lemma}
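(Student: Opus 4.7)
My approach is to proceed by induction on $\card(E(T))$, using Proposition~\ref{prop:alexander} both in the base case and to translate a vanishing $\HH^1$ into the desired count of faces. For the base case, when $\card(E(T)) = 0$ the non-emptiness and connectedness of $T$ force it to be a single vertex, so $\Im(\phi_T)$ is a single point of $\R^2$, giving $\HH^1(\Im(\phi_T),\Q) = 0$ and hence $\card(F_{\phi_T}) = 1$ by Proposition~\ref{prop:alexander}.

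For the inductive step, let $\card(E(T)) = n \geq 1$. Since any finite tree with at least one edge has a leaf, I would pick a leaf $v$ and its unique incident edge $e = \{v,v'\}$; let $T'$ be the tree obtained by deleting $v$ and $e$, so $T'$ is non-empty, connected, and has $n-1$ edges, and the restriction $\phi_{T'}$ of $\phi_T$ is a definable embedding. Because $v$ is a leaf---and under the implicit crossing-free hypothesis under which this lemma is applied in the proof of Lemma~\ref{3edges}---we have $\eta_e \cap \Im(\phi_{T'}) = \{v'\}$. Parametrizing $\phi_e$ so that $\phi_e(1) = v'$, the map
\[
H(\phi_e(t),s) = \phi_e(\max(t,s)), \qquad H(x,s) = x \text{ for } x \in \Im(\phi_{T'}), \ s \in [0,1],
\]
is a definable strong deformation retract of $\Im(\phi_T)$ onto $\Im(\phi_{T'})$. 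By the homotopy invariance of o-minimal singular cohomology \cite[\S 5]{Edmundo-Woerheide2008} this gives $\HH^1(\Im(\phi_T),\Q) \cong \HH^1(\Im(\phi_{T'}),\Q)$, and the inductive hypothesis combined with Proposition~\ref{prop:alexander} gives $\HH^1(\Im(\phi_{T'}),\Q) = 0$. One more application of Proposition~\ref{prop:alexander} closes the induction.

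I expect the main technical obstacle to be justifying the disjointness $\eta_e \cap \Im(\phi_{T'}) = \{v'\}$: this is automatic when $\card(EC(\phi_T)) = 0$, and indeed the lemma would be false without such a hypothesis, since a two-edge path whose two arcs cross already produces two complementary regions. An alternative would be a Mayer--Vietoris style argument on the decomposition $\Im(\phi_T) = \Im(\phi_{T'}) \cup \eta_e$ with intersection $\{v'\}$, using that $\HH^1$ and reduced $\HH^0$ of a single point and of a closed arc both vanish; this avoids constructing the homotopy $H$ explicitly but rests on exactly the same underlying topological fact.
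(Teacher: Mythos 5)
Your proof is correct and follows essentially the same route as the paper's: the paper simply asserts that $\Im(\phi_T)$ is definably contractible, deduces $\HH^1(\Im(\phi_T),\Q)=0$, and applies Proposition~\ref{prop:alexander}, and your leaf-peeling induction with the explicit deformation retract is just a detailed verification of that contractibility claim. Your caveat about needing $\card(EC(\phi_T))=0$ is well taken --- the paper's contractibility assertion (and hence the lemma as literally stated, ``for any definable embedding'') tacitly assumes that distinct edges meet only at shared vertices, which does hold in the one place the lemma is invoked, namely inside the proof of Lemma~\ref{3edges}, where the induced embedding is crossing-free.
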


\begin{proof}
It is an immediate consequence of Definition \ref{def:embedding}, that $\Im(\phi_T)$ is a definable, closed, bounded
and definably contractible subset of $\R^2$. 
Hence, $\HH^1(\Im(\phi_T),\Q) = 0$, and the lemma follows immediately from Proposition \ref{prop:alexander}.
\end{proof}

We prove an analogue of Euler's formula for definably planar graphs.

\begin{lemma}[{\bf Euler's formula}]\label{lem:euler}
Let $G$ be a simple connected graph. Assume that $G$ is definably planar and let 
$\phi_G$ be a definable embedding of $G$ in $\R^2$
such that $\card(EC(\phi_G))=0$. Then
\begin{equation}\label{euler}
\card(V) - \card(E) + \card(F_{\phi_G}) =  2.
\end{equation}
\end{lemma}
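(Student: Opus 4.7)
The plan is to proceed by induction on $\card(E)$. For the base case, suppose $G$ is acyclic, hence a tree. Lemma~\ref{lem:alexander} gives $\card(F_{\phi_G}) = 1$. Combined with the standard graph-theoretic identity $\card(E) = \card(V) - 1$ for a connected tree (proved by a side induction on $\card(V)$ that removes a leaf at each step), this yields $\card(V) - \card(E) + \card(F_{\phi_G}) = 2$.

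For the inductive step, let $G$ contain a cycle $C$, and pick an edge $e$ on $C$. Set $G' := G - e$. Since the remainder of $C$ provides an alternate path between the endpoints of $e$, the graph $G'$ remains connected, and restricting $\phi_G$ yields a definable embedding $\phi_{G'}$ with $\card(EC(\phi_{G'})) = 0$. Applying the inductive hypothesis to $G'$ and using $\card(V(G')) = \card(V)$ together with $\card(E(G')) = \card(E) - 1$, the identity for $G$ reduces to the single claim $\card(F_{\phi_{G'}}) = \card(F_{\phi_G}) - 1$.

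To establish this claim, I would first note the set-theoretic identity
\[
\R^2 \setminus \Im(\phi_{G'}) \;=\; (\R^2 \setminus \Im(\phi_G)) \;\cup\; \mathring{\eta_e},
\]
from which every face of $\phi_{G'}$ is either a face of $\phi_G$ left untouched, or the union of $\mathring{\eta_e}$ with the faces of $\phi_G$ whose closure contains $\eta_e$. By Lemma~\ref{atmost2faces} there are at most two such faces $f_1, f_2$, so the claim becomes equivalent to the separation statement $f_1 \neq f_2$.

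The main obstacle is precisely this separation statement. Here I would exploit that $e$ lies on the cycle $C$: the image $J := \Im(\phi_C)$ is a closed bounded definable subset of $\R^2$ which is definably homeomorphic to a circle, since the edges of $C$ have pairwise disjoint interiors (from $\card(EC(\phi_G)) = 0$) and meet only at the shared vertices of consecutive edges. Consequently $\dim_\Q \HH^1(J,\Q) = 1$, and Proposition~\ref{prop:alexander} gives that $\R^2 \setminus J$ has exactly two definably connected components. A local analysis at a point $x_0 \in \mathring{\eta_e}$, using a cylindrical cell decomposition adapted to $\Im(\phi_G)$ in the spirit of the proof of Lemma~\ref{atmost2faces}, then shows that the two sector cells adjacent to $\mathring{\eta_e}$ lie in the two different components of $\R^2 \setminus J$, and therefore must belong to distinct faces $f_1 \neq f_2$ of $\phi_G$, completing the induction.
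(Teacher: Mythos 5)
Your overall strategy is sound and genuinely different from the paper's: you induct on $\card(E)$, delete a specific edge $e$ lying on a cycle $C$, and must then prove that the two faces of $\phi_G$ flanking $\mathring{\eta_e}$ are distinct. The paper instead inducts on $\card(F_{\phi_G})$ and invokes Lemma~\ref{2faces}, which only asserts the \emph{existence} of \emph{some} edge lying in the closure of two distinct faces; that existence statement is proved by a global path argument and entirely sidesteps the question of whether a \emph{given} cycle edge separates two faces. You have correctly isolated the crux of your route ($f_1\neq f_2$), and your reduction of the face count to this separation statement is essentially the same bookkeeping as in the paper's proof of \eqref{step}.

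The gap is in how you propose to prove $f_1\neq f_2$. Proposition~\ref{prop:alexander} applied to $J=\Im(\phi_C)$ tells you only that $\R^2\setminus J$ has exactly two definably connected components; it does \emph{not} tell you that both components accumulate at the point $x_0\in\mathring{\eta_e}$, which is what you need to place the two adjacent sectors in different components. That statement is the nontrivial ``local'' half of the Jordan curve theorem (that $J$ is the common boundary of both components), and no purely local analysis via a cell decomposition can deliver it: the decomposition shows the two sectors are distinct cells, but whether they lie in the same or different \emph{global} components of $\R^2\setminus J$ is a global question. (Indeed, for an arc rather than a closed curve the identical local picture occurs and the two sectors \emph{do} lie in the same component.) The gap is fillable with the paper's tools: apply Proposition~\ref{prop:alexander} a second time to the contractible arc $A:=J\setminus\mathring{\eta_e}$ to see that $\R^2\setminus A=U\cup W\cup\mathring{\eta_e}$ is definably connected (where $U,W$ are the two components of $\R^2\setminus J$), deduce that $\clos(U)$ and $\clos(W)$ both meet $\mathring{\eta_e}$ --- hence contain $\eta_e$ by Lemma~\ref{0edge} applied to $\phi_C$ --- and only then run the sector analysis at $x_0$ to conclude one sector lies in $U$ and the other in $W$. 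As written, however, the decisive step is asserted rather than proved.
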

\begin{proof}
We prove by induction on the number of faces $\card(F_{\phi_G})$.
Let $G$ be as in the statement and let $\phi_G$ be a definable embedding of $G$, such that
$\card(EC(\phi_G))=0$ and $\card(F_{\phi_G})=1$.

We claim that $G$ has no cycles.
Indeed, suppose, for contradiction, that $G$ has a cycle $(w_1,\ldots,w_r)$, and put $e_i:=\{w_i,w_{i+1}\}$, for $1\le i\le r-1$.

Let $\displaystyle{M = \cup_{i=1}^{r-1} \Im(\phi_{e_i})}$. 
Notice that each $\Im(\phi_{e_i})$ is definably homeomorphic to $[0,1]$,
and that $\Im(\phi_{e_i}) \cap \Im(\phi_{e_j})$ is empty  if $i-j \neq 1,-1 \mod r$, and is a point otherwise.
This implies using a standard Mayer-Vietoris argument that, 
\begin{eqnarray*}
\HH^i(M,\Q) &\cong& \Q, i=0,1, \\
&\cong& 0, \mbox{ else.}
\end{eqnarray*}
Proposition \ref{prop:alexander} now implies that $\R^2\setminus M$ has exactly two definably connected components.
 
This further implies that $\R^2\setminus\Im(\phi_G)$
has at least two definably connected components, contradicting our assumption.
Thus $G$ is cycle free, as claimed.

Since $G$ is connected, it is necessarily a tree.
Hence
$\card (E(G))=\card(V(G))-1$, and the identity \eqref{euler}
holds for this case. 
This proves the base case.

Assume that the lemma holds for every simple connected graph $H$ and a definable embedding 
$\phi_H$, with $\card(EC(\phi_H))=0$ and $\card(F_{\phi_H})=n$, $n\ge 1$.
Let $G$ be a simple connected graph and let $\phi_G$ be a definable embedding of $G$, such that
$\card(EC(\phi_G))=0$ and $\card(F_{\phi_G})=n+1\ge 2$. 

By Lemma~\ref{2faces},
there exist $\eta=\eta_e\in E_{\phi_G}$ and some distinct $f,f'\in F_{\phi_G}$, such that
$\eta\subset \clos(f)\cap \clos(f')$.
Consider the graph $G'$, such that $V(G')=V(G)$ and $E(G')=E(G)\setminus\{e\}$. 
Let $\phi_{G'}$ be the definable embedding that identifies with $\phi_G$ 
for all vertices and all edges excluding $e$.
As usual, let $F_{\phi_{G'}}$ denote the set of faces induced by $\phi_{G'}$.

We claim that
\begin{equation}\label{step}
F_{\phi_{G'}}=(F_{\phi_{G}}\setminus\{f,f'\})\cup \{f\cup f'\cup\mathring\eta\}
\end{equation}
Note that $U\cup \{x\}$ is definably connected, for every definably connected $U$ and $x\in \clos(U)$ 
(see the proof of Lemma~\ref{2faces}).
This implies that $f\cup f'\cup\mathring\eta\subset \R^2\setminus\Im(\phi_{G'})$ is definably connected.

Recall that a definably connected set is also definably pathwise connected. 
Consider $p,q\in \R^2$ such that $p\in f_p$, $q\in f_q$, and $f_p\neq f_q\in F_{\phi_G}$.
Assume that $f_p\cup f_q$ is contained in a definably connected component of $\R^2\setminus \Im(\phi_{G'})$.

Let $\tau:[0,1]\to \R^2\setminus \Im(\phi_{G'})$ be a definable path such that $\tau(0)=p$
and $\tau(1)=q$.
We may assume, without loss of generality, that the image of $\tau$ does not intersect any other face of $F_{\phi_G}$;
otherwise, replace $q$ with a point of this face, and restrict $\tau$ to a subinterval of $[0,1]$.
Then 
$$
\Im(\tau)\subset f_p\cup f_q\cup\mathring \eta.
$$
Moreover, $\Im(\tau)$ necessarily intersects $\mathring\eta$ in a point $x$ such that $x\in \clos(f_p)\cap 
\clos(f_q)$. By Lemma~\ref{0edge}, $\eta\subset \clos(f_p)\cap 
\clos(f_q)$. Hence, applying Lemma~\ref{atmost2faces}, we necessarily have $\{f_p,f_q\}=\{f,f'\}$.
This proves \eqref{step}.

By the induction hypothesis, we have 
$$
\card(V(G')) - \card(E(G')) + \card(F_{\phi_{G'}}) =  2,
$$
or
$$
\card(V(G)) - (\card(E(G))-1) + (\card(F_{\phi_{G}})-1) =  2.
$$
This completes the proof.
\end{proof}

It follows from Lemma~\ref{3edges} and Lemma~\ref{lem:euler} that, for 
every connected definably planar graph $G$, with $\card(V(G)) \geq 4$,
we have 
\begin{equation}\label{eulercor}
\card(E) \le 3\cdot\card(V) - 6.
\end{equation}
Indeed, let $\phi_G$ be a definable embedding of $G$ 
in $\R^2$ such that $\card(EC(\phi_G))=0$.
By Lemma~\ref{3edges}, the closure of every $f \in F_{\phi_G}$ contains at least three distinct edges in $E_{\phi_G}$, and,
for every $e\in E$, $\eta_e$ is contained in the closures of at most two elements of $F_{\phi_G}$, by Lemma~\ref{atmost2faces}. 
Thus, $3\cdot\card(F_{\phi_G})\le 2\cdot\card(E)$.
Combined with Lemma~\ref{lem:euler}, 
the inequality \eqref{eulercor} follows.

We conclude with  an extension of the crossing number inequality, applicable to definable embeddings of graphs in $\R^2$.
\begin{lemma}[{\bf crossing number inequality}]\label{lem:cross}
Let $G$ be a simple connected graph, such that $\card(E(G))> 4\cdot \card(V(G))$. 
Then 
\begin{equation}\label{cross}
{\rm cr}(G)\ge \frac{\card(E)^3}{64\cdot\card(V)^2}.
\end{equation}
\end{lemma}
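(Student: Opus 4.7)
My plan is to adapt the classical probabilistic argument of Ajtai--Chv\'atal--Newborn--Szemer\'edi (cf.\ Alon--Spencer~\cite{Alon-Spencer}) to the definable setting. Once \eqref{eulercor} is available, the argument is essentially formal; the only thing to check is that the two standard inputs---a linear weak bound and a random sub-sampling of vertices---go through in the definable category.

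\textbf{Step 1 (weak linear bound).}  I first prove that for every simple graph $G$ with $v := \card(V(G))$ vertices and $e := \card(E(G))$ edges,
\[
\mathrm{cr}(G) \geq e - 3v.
\]
Fix a definable embedding $\phi_G$ realizing $\mathrm{cr}(G)$.  A standard local uncrossing argument---replace any crossing between two edges sharing a vertex by concatenating the two arcs ``the other way around'' inside a small definable disk around the crossing point---stays inside the definable category, since the reroute is a purely local modification of two already definable curves.  Hence we may assume every pair $(e,e')\in EC(\phi_G)$ consists of edges with four distinct endpoints.  Deleting one edge per such pair yields a definably planar subgraph $G'$ of $G$ with at least $e - \mathrm{cr}(G)$ edges.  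Summing the inequality \eqref{eulercor} over the connected components $G'_i$ of $G'$ having $v_i \geq 4$ vertices, and using the trivial bound $e_i \leq \binom{v_i}{2} \leq 3 v_i$ for the remaining components, one gets $e(G') \leq 3 v(G') = 3v$, and so $e - \mathrm{cr}(G) \leq 3v$.

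\textbf{Step 2 (probabilistic amplification).}  Fix $p \in (0,1]$ and let $G_p$ be the induced subgraph obtained by selecting each vertex of $G$ independently with probability $p$.  Restricting the optimal embedding used in Step~1 gives a definable embedding of $G_p$; a crossing of the restricted embedding survives only if all four distinct endpoints of the two crossing edges are chosen, so
\[
\mathbb{E}[\mathrm{cr}(G_p)] \leq p^4\, \mathrm{cr}(G),
\]
while $\mathbb{E}[v(G_p)]=pv$ and $\mathbb{E}[e(G_p)]=p^2 e$ by linearity.  Applying Step~1 to $G_p$ and taking expectations yields
\[
p^4\, \mathrm{cr}(G) \geq p^2 e - 3 p v.
\]
Choosing $p = 4v/e$, which lies in $(0,1]$ by the hypothesis $e > 4v$, and rearranging gives $\mathrm{cr}(G) \geq e^3/(64 v^2)$, as claimed.

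\textbf{Main obstacle.}  The probabilistic half is entirely formal once Step~1 is in place, so the real content is Step~1 and, within it, the uncrossing reduction.  Verifying this reduction in the definable setting is the only subtlety, and it is mild: the cut-and-splice is carried out inside a definable neighborhood of a single intersection point, producing new definable continuous embeddings of the two affected edges, and strictly reduces the number of crossings.  Everything else is a direct transcription of the Alon--Spencer proof.
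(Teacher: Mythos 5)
Your proof follows essentially the same route as the paper's (see the Appendix): pass to a definably planar subgraph by deleting one edge from each crossing pair, deduce the weak bound $\mathrm{cr}(G)\ge \card(E)-3\cdot\card(V)$ from \eqref{eulercor}, and amplify by random vertex sampling with $p=4\cdot\card(V)/\card(E)$. Your additional uncrossing step for adjacent edges is a refinement the paper leaves implicit (it is what justifies the factor $p^4$ rather than $p^3$ for crossings between edges sharing a vertex), though in the o-minimal setting the phrase ``a small definable disk around the crossing point'' deserves a word of care, since two definable edge-curves may a priori intersect in a one-dimensional set rather than in isolated transversal points.
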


The proof of Lemma \ref{lem:cross} is now very standard (see for example, \cite[p. 285]{Alon-Spencer})  
using the basic results on embeddings of graphs over general o-minimal
structures developed above. We include it in the appendix (Section \ref{sec:appendix}).

\begin{proof}[Proof of Lemma~\ref{lem:lines}]
Let $\Pi,\Gamma$ be as in the statement.

There exists a constant $C=C(V)$, such that each $\gamma \in \Gamma$ can be partitioned into a disjoint 
union of at most $C$ 
definable curves, 
each definably homeomorpic to the open interval $(0,1)$,
and at most $C$ points. 
Let $\Gamma_\gamma$ denote this set of curves. Let 
$\Gamma' =\cup_{\gamma \in \Gamma} \Gamma_\gamma$. By construction,
$$
I(\Pi,\Gamma)\le I(\Pi,\Gamma') +Cn
$$ and 
$\card(\Gamma')\le Cn$.

Let $\Pi'' \subset \Pi$ be the subset defined by
\[
\Pi'' = \{p \in \Pi \mid \card(\{\gamma \in \Gamma'\mid p \in \gamma\}) \leq 1\}.
\]

Let $\Pi' = \Pi \setminus \Pi''$.

Note that each point in $\Pi''$ contributes at most one incidence to our counting, and thus 
\begin{equation}\label{1}
I(\Pi,\Gamma)\le I(\Pi',\Gamma')+m+Cn.
\end{equation}

We construct a graph $G=(V,E)$ as follows.
Every point of $\Pi'$ corresponds to a vertex of $G$. 
Every pair of points $p,q\in\Pi'$ that lie \emph{consecutively} on a curve $\gamma\in\Gamma'$
are connected by an edge in $G$.
Note that a pair of vertices can lie consecutively on more than one curve of $\Gamma'$.
Nevertheless, such a pair will contribute only one edge to $G$.
Using our assumption 
\eqref{itemlabel:lines:a}
we get
\begin{equation}\label{2}
I(\Pi',\Gamma')\le k\cdot\card(E) +n,
\end{equation}
where the additive factor $n$
compensates for the at most one incidence that we lose on each curve 
(a curve incident to $r$ points, contributes exactly $r-1$ edges to the graph, counting with multiplicity).

Let $G_i=(V_i,E_i)$ denote the 
connected components of $G$ (i.e. maximal connected induced subgraphs).
Recall that each vertex of $V$ corresponds (injectively) to a point of $\Pi'$.
Let $\Gamma'_i$ denote the subset of curves $\gamma\in \Gamma'$, such that 
$\gamma$ is incident to one of the points that corresponds to a vertex of $V_i$.
Put $m_i:=\card (V_i)$ and $N_i:=\card(\Gamma'_i)$.
Observe that $V=\bigcup_iV_i$, $E=\bigcup_iE_i$, and
$\Gamma'=\bigcup_i\Gamma'_i$ are disjoint unions, and
hence
\begin{equation}\label{sum}
\sum_im_i=m~~, ~~\card(E)=\sum_i\card(E_i),~~\text{ and }~~\sum_iN_i=\card (\Gamma')\le Cn.
\end{equation}

Let $i$ be fixed.
Note that property \eqref{itemlabel:lines:b} in Lemma~\ref{lem:lines} implies that 
${\rm cr}(G_i)\le \binom{N_i}{2}k$, since each crossing is induced by a pair of
curves of $\Gamma'_i$ that intersect. 
By definition, the graph $G_i$ is simple and connected.
Applying Lemma~\ref{lem:cross} to $G_i$, we get
\begin{equation}\label{4}
\card(E_i)\le C_0 k^{1/3}m_i^{2/3}N_i^{2/3}+4m_i,
\end{equation}
for some absolute constant $C_0$.
Combining \eqref{sum}, \eqref{4}, and H\"older's inequality, we get
\begin{align*}
\card(E)
&=\sum_i\card(E_i)\\
&\le C_0 k^{1/3}\sum_im_i^{2/3}N_i^{2/3}+4\sum_i m_i\\
&\le C_0 k^{1/3}(Cn)^{1/3}\sum_im_i^{2/3}N_i^{1/3}+4m\\
&\le C_0 k^{1/3}(Cn)^{1/3}m^{2/3}\left(\sum_i N_i\right)^{1/3}+4m\\
&\le C'k^{1/3}m^{2/3}n^{2/3}+4m.\\
\end{align*}

Finally, the inequalities \eqref{1} and \eqref{2} imply 
$$
I(\Pi,\Gamma)\le C_{V,k}\left(m^{2/3}n^{2/3}+m+n\right),
$$
where $C_{V,k}$ is a constant that depends only on  $V$ and the parameter $k$.\end{proof}

\section{Proof of Theorem~\ref{thm:main}}
\label{sec:proof-of-main}

We will need the following lemma.

\begin{lemma}\label{lem:prod}
Let $S_1\subset\R^{d_1}$, $S_2\subset \R^{d_2}$ be definable sets of dimensions $k_1,k_2$, respectively.
Let $W\subset S_1\times S_2$ be a $(k_1+k_2)$-dimensional definable subset of $S_1\times S_2$.
Then there exist $S_i' \subset S_i$ definable with $\dim S_i'=k_i$, for $i=1,2$, such that
$S_1'\times S_2' \subset W$.
\end{lemma}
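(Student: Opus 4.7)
The plan is to reduce, via independent cell decompositions of $\R^{d_1}$ and $\R^{d_2}$, to a situation where each factor is definably homeomorphic to an open subset of the appropriate Euclidean space, and then to exploit the fact that a definable set of full ambient dimension has non-empty interior, which in a Cartesian product always contains a product of open boxes.

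Concretely, I would first apply the cylindrical definable cell decomposition theorem (as used already in the proof of Theorem~\ref{thm:main'}) to $\R^{d_1}$ adapted to $S_1$ and, independently, to $\R^{d_2}$ adapted to $S_2$. This produces finite partitions $S_1=\bigsqcup_\alpha C_\alpha$ and $S_2=\bigsqcup_\beta D_\beta$ into definable cells, inducing a finite partition of $W$ into the pieces $W_{\alpha,\beta}:=W\cap(C_\alpha\times D_\beta)$. Since the dimension of a finite union is the maximum of the dimensions and $\dim W=k_1+k_2$, there must exist $(\alpha_0,\beta_0)$ with $\dim W_{\alpha_0,\beta_0}=k_1+k_2$. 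Combining the trivial upper bounds $\dim C_{\alpha_0}\le k_1$, $\dim D_{\beta_0}\le k_2$ with $\dim W_{\alpha_0,\beta_0}\le\dim C_{\alpha_0}+\dim D_{\beta_0}$ forces equality in both, so $\dim C_{\alpha_0}=k_1$ and $\dim D_{\beta_0}=k_2$.

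Next, I would use the standard fact that a definable cell of dimension $k$ in some $\R^n$ is definably homeomorphic to an open subset of $\R^k$. Pick definable homeomorphisms $\theta:C_{\alpha_0}\to U_1\subset\R^{k_1}$ and $\phi:D_{\beta_0}\to U_2\subset\R^{k_2}$ with $U_i$ open. Then $(\theta\times\phi)(W_{\alpha_0,\beta_0})$ is a definable subset of $U_1\times U_2\subset\R^{k_1+k_2}$ of dimension $k_1+k_2$, and hence has non-empty interior in $\R^{k_1+k_2}$; any non-empty open subset of $\R^{k_1}\times\R^{k_2}$ contains a product $B_1\times B_2$ of non-empty open boxes with $B_i\subset U_i$. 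Setting $S_1':=\theta^{-1}(B_1)$ and $S_2':=\phi^{-1}(B_2)$ gives definable subsets of $S_1$, $S_2$ of dimensions $k_1$ and $k_2$ respectively, and because $\theta\times\phi$ respects the Cartesian product structure we obtain $S_1'\times S_2'\subset W_{\alpha_0,\beta_0}\subset W$.

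There is no real obstacle in this argument; the only point that requires care is that the cell decompositions must be carried out on $\R^{d_1}$ and $\R^{d_2}$ separately so that the pieces $C_\alpha\times D_\beta$ are genuine products (they need not form a cylindrical decomposition of $\R^{d_1+d_2}$, nor do they need to be compatible with $W$, since we only use dimension additivity over the union). The product structure is precisely what guarantees that the open box we find in the image pulls back to a product in the domain.
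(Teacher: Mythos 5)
Your proof is correct and follows essentially the same route as the paper: decompose each factor separately into cells, locate a product of cells of dimensions $k_1$ and $k_2$ over which $W$ still has dimension $k_1+k_2$, transport to $\R^{k_1}\times\R^{k_2}$ via the product of the cell homeomorphisms, and use that a full-dimensional definable set has non-empty interior, hence contains a product of open boxes that pulls back to the desired $S_1'\times S_2'$. Your justification that the selected cells must have dimensions exactly $k_1$ and $k_2$ (via subadditivity of dimension over the product) is in fact slightly more explicit than the paper's.
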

\begin{proof}
For each $i=1,2$, consider a cylindrical definable cell decomposition of $\R^{d_i}$ adapted to $S_i$.
It is easy to see that $\dim (W\cap (C\times D))=k_1+k_2$,
for some cells $C\subset S_1$ and $D\subset S_2$ of the respective decompositions,
where $\dim (C)=k_1$ and $\dim (C)=k_2$.
Indeed, otherwise we would have had $\dim(W\cap (S_1\times S_2))<k_1+k_2$,
contradicting our assumption on $W$.

By properties of the decomposition, there exist definable homeomorphisms $\theta_C:C\to \R^{k_1}$
and $\theta_D:D\to\R^{k_2}$. Let 
 $f = \theta_C \times \theta_D :C\times D\to \R^{k_1}\times \R^{k_2}$ be the homeomorphism induced by $\theta_C$ and 
 $\theta_D$.
Then $W':=f(W\cap (C\times D))$, is a $(k_1+k_2)$-dimensional definable set contained in $\R^{k_1}\times \R^{k_2}$.
In particular, $W'$ contains an open set of $\R^{k_1}\times \R^{k_2}$, and hence there exist open sets
$I\subset \R^{k_1}$ and $J\subset \R^{k_2}$, such that $I\times J\subset W'$.
Letting $S_1':=\theta_C^{-1}(I)$ and $S_2':=\theta_D^{-1}(J)$ the lemma follows.
\end{proof}

Let us introduce some notation.
For $p,q\in E$, we define
$$
\gamma_q:=\pi_1(\pi_2^{-1}(q)\cap V)
$$
and 
$$
\gamma^*_p:=\pi_2(\pi_1^{-1}(p)\cap V).
$$
For $i=0,1,2$, we let  
$$Y_i:=\{q\mid \dim(\gamma_q)= i\}
$$
and 
$$X_i:=\{p\mid \dim(\gamma^*_p)=i\};
$$
note that $\dim(\gamma_q),\dim(\gamma^*_p)\le 2$, for every $p,q$.

\begin{lemma}\label{lem:Usmall}
Let $V$, $X_1$, $Y_1$ be as above.
Let $P,Q\subset E$ be finite subsets.
Assume that $P\subset X_1$, $Q\subset Y_1$, and that,
for every $p\in P, q\in Q$, each of the sets
$$
U_{1,q}:=\{q'\in Y_1\mid \dim(\gamma_q\cap \gamma_{q'})\ge 1\},
$$ 
$$
U_{2,p}:= \{p'\in X_1\mid \dim(\gamma^*_p\cap \gamma^*_{p'})\ge 1\}
$$ 
is zero-dimensional.
Then property 
\eqref{itemlabel:main:i}
in the statement of Theorem~\ref{thm:main} holds.
\end{lemma}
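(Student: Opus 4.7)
The plan is to reduce to Lemma~\ref{lem:lines} by partitioning $P$ and $Q$ into a bounded number of subclasses on which hypotheses (a) and (b) of that lemma hold with a uniform intersection bound.

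First, extract the relevant constants from the o-minimal structure. Observe that $Z_1 = \{q \in Y_1 : \dim U_{1,q} = 0\}$ and $Z_2 = \{p \in X_1 : \dim U_{2,p} = 0\}$ are definable, and by hypothesis $P \subset Z_2$ and $Q \subset Z_1$. Since $\{U_{1,q}\}_{q \in Z_1}$ is a definable family of finite sets, o-minimal uniform finiteness (a standard consequence of the cell decomposition theorem) produces a constant $c = c(V)$ with $\card(U_{1,q}) \le c$ for every $q \in Z_1$, and analogously for $U_{2,p}$. Applying the same principle to the definable families $\{\gamma_q \cap \gamma_{q'}\}$ and $\{\gamma^*_p \cap \gamma^*_{p'}\}$ restricted to pairs of zero-dimensional intersection produces a constant $k = k(V)$ such that $\card(\gamma_q \cap \gamma_{q'}) \le k$ whenever $q \ne q' \in Y_1$ and $\dim(\gamma_q \cap \gamma_{q'}) = 0$, and symmetrically for $X_1$.

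Next, form the graph $H_Q$ on vertex set $Q$ by joining $q \ne q'$ iff $q' \in U_{1,q}$; this relation is symmetric and by the previous step every vertex has degree at most $c$, so greedy coloring partitions $Q$ into $c+1$ independent subsets $Q_1, \ldots, Q_{c+1}$. Partition $P$ analogously into $P_1, \ldots, P_{c+1}$. For each pair $(i,j)$ apply Lemma~\ref{lem:lines} with $\Pi = P_i$ and $\Gamma = \{\gamma_q : q \in Q_j\}$. Independence in $Q_j$ forces $\dim(\gamma_q \cap \gamma_{q'}) = 0$ for distinct $q, q' \in Q_j$, so the curves are pairwise distinct and intersect in at most $k$ points, verifying hypothesis~(b); similarly independence in $P_i$ forces $\dim(\gamma^*_p \cap \gamma^*_{p'}) = 0$, and hence the number of $q \in Q_j$ with $p, p' \in \gamma_q$ equals $\card(Q_j \cap \gamma^*_p \cap \gamma^*_{p'}) \le k$, verifying hypothesis~(a). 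Because the curves are distinct, $\card(V \cap (P_i \times Q_j)) = I(P_i, \Gamma)$, so
\[
\card(V \cap (P_i \times Q_j)) \le C_{V,k} \bigl(\card(P_i)^{2/3}\card(Q_j)^{2/3} + \card(P_i) + \card(Q_j)\bigr).
\]

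Finally, summing over the $(c+1)^2$ pairs and applying H\"older's inequality in the form $\sum_i \card(P_i)^{2/3} \le (c+1)^{1/3}\card(P)^{2/3}$ (and analogously for $Q$) absorbs the constant factor into a single $C_V$ and yields the desired bound. The main obstacle is verifying the uniform-finiteness claims of the first step: one must identify the correct definable families so that the constants $c$ and $k$ depend only on $V$, not on the finite sets $P, Q$. This is a standard o-minimal fact, but it is the piece that powers the entire reduction; the rest is a routine coloring-plus-summation argument.
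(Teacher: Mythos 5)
Your proposal is correct and follows essentially the same route as the paper: uniform finiteness over the definable families to get constants depending only on $V$, a bounded-degree graph coloring to partition $P$ and $Q$ into $O_V(1)$ classes on which pairwise curve intersections are finite and uniformly bounded, an application of Lemma~\ref{lem:lines} to each pair of classes, and a summation at the end. The only (immaterial) difference is that you take $\Pi=P_i$ and $\Gamma=\{\gamma_q: q\in Q_j\}$ whereas the paper uses the transposed choice $\Pi=Q_j$, $\Gamma=\{\gamma^*_p : p\in P_i\}$; your explicit verification that $q\mapsto\gamma_q$ is injective on $Q_j$ and your Hölder summation are details the paper leaves implicit.
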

\begin{proof}
Our assumption on 
$U_{1,q}$ and $U_{2,p}$ implies that for some constant $M$,
which depends only on $V$, 
$$
|U_{1,q}|\le M~~\text{and}~~|U_{2,p}|\le M,
$$
for every $p\in P$, $q\in Q$.

Let $G = G(P)$ denote the graph defined by 
\begin{eqnarray*}
V(G) &=& P, \\
E(G) &=& \{(p,p') \in V(G(P)) \mid \dim \left(\pi_2(\pi_1^{-1}(p) \cap V) \cap \pi_2(\pi_1^{-1}(p')\cap V) \right)=1\}.
\end{eqnarray*}

Notice that $G$ has maximum vertex degree at most $M$, 
so we can color the graph with $M+1$ colors. In other words, we can partition $P$ into $M+1$ sets $P_i$, 
so that for any pair of distinct $p,p'\in P_i$ the set
$$
\pi_2(\pi_1^{-1}(p)\cap V) \cap  \pi_2(\pi_1^{-1}(p')\cap V)
$$
is finite, and hence bounded by some constant $N$, which depends only on $V$.

Similarly, there exists a partition of $Q$ into at most $M+1$ sets $Q_j$ so that
$$
\pi_1(\pi_2^{-1}(q)\cap V)\cap \pi_1(\pi_2^{-1}(q')\cap V)
$$
is finite and of cardinality at most a constant depending only on $V$ (which we can again take to be $N$), 
for every pair of distinct $q,q'\in Q_j$.

To prove property 
\eqref{itemlabel:main:i},
it suffices to show that there exists a constant $C_V$ depending only on $V$, such that
$$
\card(V\cap (P_i\times Q_j)) \leq C_V\cdot (\card(P_i)^{2/3}\cdot \card(Q_j)^{2/3}+\card(P_i)+\card(Q_j)),
$$
for every pair $1\le i,j\le M$.

Fix $1\le i,j\le M$ and put $\Pi:=Q_j$ and 
$
\Gamma:=\{\gamma_{p}\mid p\in P_i\}
$
where
$$
\gamma_{p}:=\pi_2(\pi_1^{-1}(p)\cap V).
$$
By definition, $q \in \gamma_{p}$ if and only if $(p,q)\in V$.
Note also that 
$$
\card(\gamma\cap \gamma') \le N,
$$
for every pair of distinct $\gamma,\gamma'\in \Gamma$, 
and that
$$
\card(\{\gamma\in \Gamma\mid p,p'\in \gamma\})\le N,
$$
for every pair of distinct $p,p'\in \Pi$.

Now apply Lemma \ref{lem:lines}.
\end{proof}

\begin{proof}[Proof of Theorem~\ref{thm:main}]
First observe that if $\dim(V) = 4$, then 
it follows immediately from Lemma \ref{lem:prod}, with $S_1=S_2  =\R^2$ and 
$W = V$, that property \eqref{itemlabel:main:ii} of the theorem holds. So we can assume that $\dim(V) \leq 3$.

Our strategy is to try to bound the quantity $\card(V\cap(P\times Q))$, as in property \eqref{itemlabel:main:i},
and, in each step, prove that if the bound fails, then property \eqref{itemlabel:main:ii} of the theorem must hold. 
Since 
$\dim(V) \leq 3$, 
we have $\dim X_i, \dim Y_i\le 3-i$, for each $i=1,2$.
We bound the cardinality of 
$V\cap(P\times (Q \cap Y_2))$ 
(unless property \eqref{itemlabel:main:ii}  holds).
Let 
$$
Y^*_2:=\{p\in E\mid \dim(\gamma_p^*\cap Y_2)\ge 1\}.
$$
Note that the definition of $Y^*_2$ is independent of $P,Q$ and depends only on $V$.

Assume first that $\dim Y^*_2\ge1$. Note that this implies in particular that $\dim Y_2=1$.
Then the set
$$
\{(p,q)\in F\mid p\in Y_2^*, q\in Y_2, (p,q)\in V\}\subset V
$$
is of dimension $\dim (Y_2^*)+1$ and it is contained in the 
the set $Y_2^*\times Y_2$. Recalling that $\dim(Y_2)=1$, and applying Lemma~\ref{lem:prod}
to conclude that there exists definable sets  $\alpha,\beta \subset \R^2$ with 
$\dim\alpha=\dim Y_2^*\ge 1$ and $\dim \beta=1$, such that $\alpha \times \beta \subset V$.
This completes the proof of the theorem for this case.

Hence, we can assume $\dim Y^*_2=0$.
Since $Y^*_2$ is finite, 
$\card(Y^*_2)$ is bounded from above by a constant $C_2'$ which depends only on $V$.
Hence, $\card (P\cap Y^*_2) \leq C_2'$,
and
in particular, 
$$
\card\left(V\cap \left((P\cap Y^*_2)\times (Q\cap Y_2)\right)\right)\le C_2'\cdot \card(Q).
$$

Note also that for $p\in P\setminus Y_2^*$ we have 
$\card(\gamma^*_p\cap Y_2) \leq C_2''$, where $C_2''$ is some constant that depends only on $V$. Hence,
$$
\card(V\cap ((P\setminus Y_2^*)\times (Q\cap Y_2)))=C_2'' \cdot \card(P).
$$
We conclude that
$$
\card(V\cap (P\times (Q\cap Y_2)))\le C_2 \cdot (\card(P)+\card(Q)),
$$
for some constant $C_2$ that depends only on $V$.

Symmetrically, it follows that 
$$
\card(V\cap ((P\cap X_2)\times Q))\le C_3 \cdot (\card(P)+\card(Q)),
$$
for some constant $C_3>0$ which depends only on $V$, 
unless property \eqref{itemlabel:main:ii}  of the theorem holds.

We are left to bound the cardinality of $V\cap \left((P\cap X_1)\times (Q\cap Y_1)\right)$.
Without loss of generality, we can assume in what follows, that $P\subset X_1$
and $Q\subset Y_1$.

Let 
\begin{eqnarray*}
U_1 &:=& \{(q,q') \in Y_1\times Y_1 \mid \dim (\gamma_q\cap\gamma_{q'}) \geq 1\}, \\
U_2 &:=& \{(p,p') \in X_1\times X_1 \mid \dim (\gamma^*_p\cap \gamma^*_{p'}) \geq 1\}.
\end{eqnarray*}

For $q \in Y_1$ (resp. $p \in X_1$) we will denote by $U_{1,q}$ (resp. $U_{2,p}$), the definable set
$\pi_1^{-1}(q) \cap U_1$ (resp. $\pi_1^{-1}(p)\cap U_2$).

Assume first that $\dim (U_{1,q})\ge1$, for some fixed $q\in Y_1$.
By definition, $\dim(\gamma_q\cap \gamma_{q'})\ge 1$ for all $q'\in U_{1,q}$. Thus, the set 
$$
\{(p,q')\in \gamma_q\times U_{1,q}\mid (p,q')\in V \}
$$
is of dimension $\dim(U_{1,q})+1$.
By Lemma~\ref{lem:prod}, this implies that there exist definable  $\alpha,\beta\subset E$,
such that $\dim \alpha=\dim U_{1,q}\ge1$, $\dim\beta=1$ and $\alpha\times \beta\subset V$.
Thus property \eqref{itemlabel:main:ii}  of the theorem holds for this case.
Symmetrically, property \eqref{itemlabel:main:ii} of the theorem holds in case $\dim (U_{2,p})\ge1$ for some $p\in X_1$.

Therefore, we can assume that $\dim (U_{2,p})=\dim (U_{1,q})=0$, for every $p\in X_1$
and $q\in Y_1$.
Property \eqref{itemlabel:main:i} of the theorem now follows from Lemma~\ref{lem:Usmall}.
\end{proof}

\bibliographystyle{amsplain}
\bibliography{master}

\section{Appendix}
\label{sec:appendix}
\begin{proof}[Proof of Lemma \ref{lem:cross}] 
Let $G$ be as in the statement.
Let $\phi_G$ be a definable embedding of $G$, with $\card(EC(\phi_G)) = \mathrm{cr}(G)$.

Let $E''$ be a minimal subset of $E$ with the property that $E''$ has a nonempty
intersection with each element of $EC(\phi_G)$. Note that it follows from this definition that
$\card(E'') \leq \card(EC(\phi_G)) = {\rm cr}(G)$.
Let $E' = E \setminus E''$.

By construction, the graph $G' = (V,E')$ is definably planar and
has the same number of vertices as $G$. Moreover, $\card(E') \ge \card(E)-{\rm cr}(G)$.
By the inequality \eqref{eulercor} applied to the graph $G'$, we also have
$\card(E') \le 3\cdot \card(V) -6$. Thus, 
\begin{equation}\label{weakcross}
{\rm cr}(G)\ge \card(E) -3 \cdot \card(V).
\end{equation}
So \eqref{weakcross} holds for any simple 
graph $G=(V,E)$ with $\card(V) \ge 4$.

We now use a probabilistic argument to obtain a sharper inequality.
Let $G$ be as above and consider $G''$ a random subgraph of $G$ obtained 
by taking each vertex of $G$ to lie in $G''$ independently with probability $p$, and 
taking an edge of $G$ to lie in $G''$ if and only if its two vertices were chosen to lie in $G''$.
Let $v''$ and $e''$ denote the number of edges and vertices of $G''$, respectively. 
By \eqref{weakcross}, we have ${\rm cr}(G'')\ge e''-3\cdot v''$.
Taking expectation, we have
$$
\mathbb E[{\rm cr}(G'')]\ge \mathbb E[e'']-3\cdot\mathbb E [v''],
$$
which implies
$$
p^4\cdot{\rm cr}(G)\ge p^2\cdot \card(E)-3 p\cdot\card(V).
$$
Now if we assume $\card(E)>4\cdot\card(V)$ and set $p = 4\cdot \card(V)/\card(E)$, we obtain
\begin{equation}
{\rm cr}(G)\ge\frac{\card(E)^3}{64\cdot\card(V)^2}.\qedhere
\end{equation}
\end{proof}

\end{document}